\providecommand{\keywords}[1]
{
  \small	
  \textbf{\textit{Keywords---}} #1
}
\def\BState{\State\hskip-\ALG@thistlm}
\newtheorem{lemma}{Lemma}
\newtheorem{corollary}{Corollary}
\newtheorem{proposition}{Proposition}
\newtheorem{remark}{Remark}
\newtheorem{definition}{Definition}
\newcommand{\cC}{\mathcal{C}}
\newcommand{\cF}{\mathcal{F}}
\newcommand{\cA}{\mathcal{A}}
\newcommand{\aF}{F_{+}}
\newcommand{\gF}{F_{\#}}
\newcommand{\cX}{\mathcal{X}}
\newcommand{\bB}{\mathbf{B}}
\newcommand{\bH}{\mathbf{H}}
\newcommand{\aP}{\mathbf{P}_{+}}
\newcommand{\aD}{\mathbf{D}_{+}}
\newcommand{\gP}{\mathbf{P}_{\#}}
\newcommand{\gD}{\mathbf{D}_{\#}}
\newcommand{\bV}{\mathbf{V}}
\newcommand\setbar{\ensuremath\ |\ }
\newcommand\forAll{\forall \text{ }}
\newtheorem{MyAssum}{}
\newcommand*{\myQED}{\hfill\ensuremath{\square}}
\renewenvironment{abstract}
{\begin{quote}
\noindent \rule{\linewidth}{.5pt}\par{\bfseries \abstractname.}}
{\medskip\noindent \rule{\linewidth}{.5pt}
\end{quote}
}
\newtheorem*{rep@theorem}{\rep@title}
\newcommand{\newreptheorem}[2]{%
\newenvironment{rep#1}[1]{%
 \def\rep@title{#2 \ref{##1}}%
 \begin{rep@theorem}}%
 {\end{rep@theorem}}}
\title{Theorems of the Alternative for Conic Integer Programming}
\date{}
\author{\small Temitayo Ajayi}
\author{\small Varun Suriyanarayana}
\author{\small Andrew J. Schaefer}
\affil{\footnotesize Department of Computational and Applied Mathematics, Rice University, Houston, TX 77005, USA}
\begin{document}
\maketitle

\begin{abstract}
Farkas' Lemma is a foundational result in linear programming, with implications in duality, optimality conditions, and stochastic and bilevel programming. Its generalizations are known as theorems of the alternative. There exist theorems of the alternative for integer programming and conic programming. We present theorems of the alternative for conic integer programming. We provide a nested procedure to construct a function that characterizes feasibility over right-hand sides and can determine which statement in a theorem of the alternative holds.

\end{abstract}

\keywords{Farkas' lemma, conic integer programming, superadditivity}

\section{Introduction}\label{Introduction}
Assessing the feasibility of optimization problems is important in areas such as bilevel or stochastic optimization, in which knowledge of subproblem feasibility can impact the design and analysis of algorithms. In this paper, we study conic integer programming feasibility. We first provide theorems of the alternative using superadditive duality, then we derive an algorithm to generate a function that represents the feasibility of the conic integer programs over a finite set of right-hand sides. 

We begin this study by reviewing a theorem of the alternative for linear inequalities.

\begin{proposition}\citep{Farkas1894} Let $A \in \mathbb{R}^{m \times n}$ and $b \in \mathbb{R}^{m}.$ Exactly one of the following is true:
\begin{itemize}
\item $\{x \in \mathbb{R}^{n}_{+} \setbar Ax \leq b\} \neq \emptyset$.
\item $\{v \in \mathbb{R}^{m}_{+} \setbar A^{T}v \geq 0, b^{T}v < 0\} \neq \emptyset.$
\end{itemize}
\end{proposition}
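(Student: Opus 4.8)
The plan is to split the ``exactly one'' assertion into its two constituent parts: that the two alternatives cannot hold simultaneously, and that at least one of them must hold. The mutual-exclusivity direction is a short algebraic computation and I would dispatch it first. Suppose toward a contradiction that there exist $x \in \mathbb{R}^n_+$ with $Ax \leq b$ and $v \in \mathbb{R}^m_+$ with $A^T v \geq 0$ and $b^T v < 0$. Multiplying $Ax \leq b$ on the left by $v^T \geq 0$ preserves the inequality, giving $v^T A x \leq v^T b = b^T v < 0$; on the other hand $v^T A x = (A^T v)^T x \geq 0$, since $A^T v \geq 0$ and $x \geq 0$. These two bounds are incompatible, so at most one alternative can hold.

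For the existence direction, I would recast feasibility of the first system geometrically. The set $\{x \in \mathbb{R}^n_+ : Ax \leq b\}$ is nonempty if and only if $b$ lies in the cone
\[
K = \{Ax + s : x \in \mathbb{R}^n_+,\ s \in \mathbb{R}^m_+\},
\]
where the slack variables $s$ absorb the gap in $Ax \leq b$. The point is that $K$ is the conical hull of finitely many vectors, namely the columns of $A$ together with the standard basis vectors $e_1, \dots, e_m$ of $\mathbb{R}^m$.

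Now assume the first alternative fails, so $b \notin K$. The key structural fact is that a finitely generated cone is closed and convex; I would invoke this and apply the separating hyperplane theorem to the point $b$ and the closed convex cone $K$. Because $K$ is a cone containing the origin, the separation can be taken in the homogeneous form $v^T b < 0 \leq v^T y$ for every $y \in K$. Specializing $y$ to the generators then extracts the certificate: taking $y = e_i$ forces $v_i \geq 0$, so $v \in \mathbb{R}^m_+$; taking $y$ equal to the $j$th column of $A$ forces $(A^T v)_j \geq 0$, so $A^T v \geq 0$; and $v^T b = b^T v < 0$ holds by construction. Thus $v$ witnesses the second alternative.

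The main obstacle is the closedness of the finitely generated cone $K$, which is exactly what upgrades the separating hyperplane to the strict inequality $v^T b < 0$; without it the certificate could degenerate. One can establish this closedness via Carath\'eodory's theorem or an induction on the number of generators. An alternative route that sidesteps the topology entirely is Fourier--Motzkin elimination: projecting out $x_1, \dots, x_n$ from the system $Ax \leq b$, $x \geq 0$ produces a finite list of inequalities of the form $v^T b \geq 0$, each multiplier vector $v$ satisfying $v \geq 0$ and $A^T v \geq 0$, and feasibility of the original system is equivalent to all of these holding. Infeasibility therefore means some such inequality is violated, i.e. $b^T v < 0$, which is precisely the second alternative. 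I expect the cleanest writeup to use whichever of these two mechanisms the paper has already developed.
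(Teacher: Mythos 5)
The paper offers no proof of this proposition: it is the classical Farkas Lemma, stated with a citation to Farkas (1894) as motivating background, so there is no in-paper argument to compare yours against. Judged on its own, your proof is correct and is the standard modern treatment. The exclusivity half is exactly right: with $x \in \mathbb{R}^n_+$, $v \in \mathbb{R}^m_+$, you get $0 \leq (A^T v)^T x = v^T A x \leq v^T b < 0$, a contradiction. For existence, your reformulation of feasibility as $b \in K$ with $K = \{Ax + s : x \in \mathbb{R}^n_+, s \in \mathbb{R}^m_+\} = \mathrm{cone}(a^1,\dots,a^n,e_1,\dots,e_m)$ is the right move, and you correctly identify the one genuinely delicate point: strict separation of $b$ from $K$ requires $K$ closed, and closedness of finitely generated cones is a theorem (Carath\'eodory plus a compactness argument, or induction on generators), not an immediate fact --- many flawed writeups elide this. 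Your homogenization step ($0 \in K$ and conic invariance force the separator to satisfy $v^T y \geq 0$ for all $y \in K$ while $v^T b < 0$) and the extraction of $v \in \mathbb{R}^m_+$, $A^T v \geq 0$ from the generators are all sound. The Fourier--Motzkin alternative you sketch is likewise a complete and fully elementary route that avoids topology altogether; for a self-contained proof it is arguably preferable, while the separation proof has the advantage of generalizing directly to the closed convex cones $K$ that the rest of this paper works with (indeed it is the template for the conic theorem of the alternative in Luenberger and Ye cited in the introduction). Either mechanism, written out, would complete the proof; as a proposal this has no gaps.
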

This early theorem of the alternative is frequently applied to linear programming. Since the 19$^{th}$ century, there have been many other theorems of the alternative that apply to different optimization problems. A theorem of the alternative for linear Diophantine equations (i.e., integer programming feasibility) can be found in \cite{Schrijver1986}. Other theorems of the alternative for integer programs include \cite{Edmonds1977}, \cite{Blair1982}, \cite{Koppe2004}, \cite{Lasserre2004}, and \cite{Dehghanian2016}. 

Conic programming is a generalization of linear programming that includes well-known topics, such as second-order cone programming \citep{Lobo1998,Alizadeh2003}, semidefinite programming \citep{Vandenberghe1996}, and copositive programming \citep{Dur2010}, among others. A theorem of the alternative for conic programming can be found in \citet[Chapter 6]{Luenberger2015}.

There has been increased interest in connections between linear integer programming and nonlinear integer programming. In particular, connections between discrete optimization and conic programming have yielded new results. \cite{Goemans1995} show that the NP-hard max-cut problem can be well approximated by a semidefinite program, and \cite{Laurent1995} also study semidefinite relaxations of the max-cut problem. The quadratic integer program for max cut (seen in \cite{Goemans1995}) can be reformulated as a semidefinite (conic) integer program with a rank constraint. \cite{Cvetkovic1999} present a binary semidefinite programming formulation for the symmetric traveling salesman problem, and \cite{Manousakis2018} provide a binary semidefinite programming formulation for power system observation placement. \cite{Cezik2005} study Chv\'{a}tal-Gomory cuts in the context of binary conic programming, and \cite{Drewes2009} studies Gomory and Chv\`{a}tal-Gomory cuts for pure second-order cone programming. \cite{Letchford2012} study the convex hulls of binary positive semidefinite matrix feasible regions (also known as spectrahedra). 

Thus, there is interest in both theorems of the alternative and conic integer programming. However, there are no theorems of the alternative for conic integer programming in the literature. Using superadditive functions, we develop the first theorems of the alternative for conic integer programming, and we describe an algorithm that constructs a certificate of infeasibility. 

\section{Preliminaries}
Let $A \in \mathbb{R}^{m \times n}$ and $c \in \mathbb{R}^{n}$. Denote the $j^{th}$ column of $A$ by $a^{j}$ and the $j^{th}$ standard basis vector of $\mathbb{R}^{n}$ by $e_{j}$. Let $K \subset \mathbb{R}^{m}$ be a closed, convex, and pointed cone.  The cone $K$ induces \textit{conic inequalities}; that is, for any $\beta^{1}, \beta^{2} \in \mathbb{R}^{m}, \beta^{1} \preceq_{K} \beta^{2}$ is equivalent to $\beta^{2} - \beta^{1} \in K$. For the special case in which $K$ is full-dimensional, $\beta^{1} \prec_{K} \beta^{2}$ is equivalent to $\beta^{2} - \beta^{1} \in \text{int}(K)$.


Given $\beta \in \mathbb{R}^{m}$, denote the following parametrized conic integer program by CIP($\beta$):
\begin{align*}
\sup \ &c^{T}x\\
\text{s.t.} \ &Ax \preceq_{K} \beta,\\
&x \in \mathbb{Z}^{n}_{+}.
\end{align*}

\begin{definition}\thlabel{superaddDef}
A function $F: \mathbb{R}^{m} \to \mathbb{R}$ is \emph{superadditive} if for any $\beta^{1}, \beta^{2} \in \mathbb{R}^{m},$ $F(\beta^{1}) + F(\beta^{2}) \leq F(\beta^{1} + \beta^{2})$.
\end{definition}

\begin{definition}\thlabel{nondecreasingConeDef}
A function $F: \mathbb{R}^{m} \to \mathbb{R}$ is \emph{nondecreasing with respect to $K$} if $\beta^{1} \preceq_{K} \beta^{2}$ implies $F(\beta^{1}) \leq F(\beta^{2})$.
\end{definition}
Let $\Gamma^{m} \coloneqq \{F: \mathbb{R}^{m} \to \mathbb{R} \setbar F \text{ is superadditive and nondecreasing with respect to } K\}.$

\cite{Moran2018} present a superadditive dual for conic mixed-integer programs with free variables. The following formulation is adapted for conic integer programs with nonnegative variables. 
\begin{subequations}\label{MoranDual}
\begin{align}
\inf \ & F(\beta)\\
\text{s.t.} \ &F(a^{j}) \geq c_{j}, \forAll j \in \{1,...,n\},\\
&F(0) = 0,\\
&F \in \Gamma^{m}.
\end{align}
\end{subequations}

\begin{proposition} \thlabel{ourWeakDuality}
Let $x$ be a feasible solution to CIP($\beta$), and let $F$ be a feasible solution to \eqref{MoranDual}. Then $F(\beta) \geq c^{T}x$.
\end{proposition}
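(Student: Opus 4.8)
The plan is to chain together the three defining properties of a feasible dual function $F$ — monotonicity with respect to $K$, superadditivity, and the constraints $F(a^j) \geq c_j$, $F(0)=0$ — to sandwich $c^Tx$ below $F(\beta)$. The starting point is the feasibility of $x$ for CIP($\beta$), which gives $Ax \preceq_K \beta$. Applying \thref{nondecreasingConeDef} immediately yields $F(Ax) \leq F(\beta)$, so it suffices to show $F(Ax) \geq c^Tx$.

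Next I would expand $Ax = \sum_{j=1}^{n} x_j a^j$ and peel off the columns one at a time using superadditivity from \thref{superaddDef}. A routine induction on $n$ gives $F\!\left(\sum_{j=1}^{n} x_j a^j\right) \geq \sum_{j=1}^{n} F(x_j a^j)$. The crucial use of integrality enters here: since each $x_j \in \mathbb{Z}_+$, I can apply superadditivity a second time, now to the integer scalar multiple $x_j a^j = a^j + \dots + a^j$. By induction on $x_j$, together with the base case $x_j = 0$ handled by the constraint $F(0)=0$, this yields $F(x_j a^j) \geq x_j F(a^j)$ for every $j$. Substituting the dual feasibility constraint $F(a^j) \geq c_j$ then gives $F(x_j a^j) \geq x_j c_j$.

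Assembling these estimates produces the chain
\[
F(\beta) \;\geq\; F(Ax) \;=\; F\!\left(\sum_{j=1}^{n} x_j a^j\right) \;\geq\; \sum_{j=1}^{n} F(x_j a^j) \;\geq\; \sum_{j=1}^{n} x_j F(a^j) \;\geq\; \sum_{j=1}^{n} x_j c_j \;=\; c^Tx,
\]
which is the claimed inequality.

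I expect the only delicate point to be the justification of $F(x_j a^j) \geq x_j F(a^j)$: superadditivity alone does not yield scalar homogeneity for arbitrary reals, and it is precisely the nonnegative \emph{integrality} of the $x_j$ — combined with $F(0)=0$ for the base case — that makes the repeated decomposition valid. This is exactly where the argument would fail for continuous variables, so I would state that induction explicitly rather than treating it as immediate. The monotonicity step is comparatively straightforward, requiring only that the single conic inequality $Ax \preceq_K \beta$ be fed directly into \thref{nondecreasingConeDef}.
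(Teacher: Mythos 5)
Your proof is correct and follows essentially the same route as the paper's: the identical chain $F(\beta) \geq F(Ax) \geq \sum_{j} F(a^{j}x_{j}) \geq \sum_{j} F(a^{j})x_{j} \geq c^{T}x$ using monotonicity, superadditivity, and dual feasibility. The only difference is that you spell out the two inductions (on $n$ and on the integer $x_j$, with $F(0)=0$ anchoring the base case) that the paper compresses into the remark that the middle inequalities ``hold because $F$ is superadditive''---a worthwhile explication, since that is exactly where integrality is used, but not a different argument.
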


We note that \cite{Moran2018} state a similar result when $K$ is a regular cone (closed, pointed, convex, and full-dimensional). In fact, \thref{ourWeakDuality} holds without full-dimensionality. 

\begin{proposition}\thlabel{MoranIPDual}\citep{Moran2018} Consider CIP($\beta$), where $K$ is a regular cone. Then \eqref{MoranDual} is a strong dual to CIP($\beta$). 
\end{proposition}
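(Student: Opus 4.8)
The plan is to produce an explicit optimizer of the dual \eqref{MoranDual} whose value equals the optimal value of CIP($\beta$). Since \thref{ourWeakDuality} already gives that every feasible dual value $F(\beta)$ dominates every feasible primal value $c^{T}x$, the optimal value of \eqref{MoranDual} is at least that of CIP($\beta$), and only the reverse inequality needs to be established. The natural certificate is the \emph{value function}
\[
z(\delta) \coloneqq \sup\{c^{T}x \setbar Ax \preceq_{K} \delta,\ x \in \mathbb{Z}^{n}_{+}\},
\]
because $z(\beta)$ is by definition the optimal value of CIP($\beta$); hence it suffices to show that $z$ is feasible for \eqref{MoranDual}, i.e., that $z \in \Gamma^{m}$ and that $z(a^{j}) \geq c_{j}$ for all $j$ and $z(0) = 0$. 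Feasibility of $z$ then forces the dual optimum to be at most $z(\beta)$, and combined with weak duality this yields equality.

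The constraints defining feasibility for \eqref{MoranDual} follow from the cone structure. For monotonicity with respect to $K$: if $\delta^{1} \preceq_{K} \delta^{2}$, then $Ax \preceq_{K} \delta^{1}$ implies $Ax \preceq_{K} \delta^{2}$ by transitivity of $\preceq_{K}$ (closure of $K$ under addition), so the feasible set grows and $z(\delta^{1}) \leq z(\delta^{2})$. For superadditivity: given $x^{1}, x^{2}$ feasible for $\delta^{1}, \delta^{2}$, the sum $x^{1}+x^{2}$ lies in $\mathbb{Z}^{n}_{+}$ and satisfies $A(x^{1}+x^{2}) \preceq_{K} \delta^{1}+\delta^{2}$, whence $z(\delta^{1})+z(\delta^{2}) \leq z(\delta^{1}+\delta^{2})$. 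The remaining constraints are witnessed by explicit points: $x = e_{j}$ is feasible for $\delta = a^{j}$ since $0 \in K$, giving $z(a^{j}) \geq c_{j}$; and $x = 0$ is feasible for $\delta = 0$, giving $z(0) \geq 0$, the reverse inequality $z(0) \leq 0$ following from boundedness (no integer ray $r$ with $Ar \preceq_{K} 0$ and $c^{T}r > 0$ can exist, else CIP($\beta$) would be unbounded).

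The crux --- and the reason regularity of $K$ is assumed here but not for weak duality --- is finiteness. Membership in $\Gamma^{m}$ demands a function $\mathbb{R}^{m} \to \mathbb{R}$, yet $z(\delta) = -\infty$ on right-hand sides where CIP($\delta$) is infeasible and $z(\delta) = +\infty$ where it is unbounded. I would first reduce to the case in which CIP($\beta$) is feasible with finite value, treating the infeasible and unbounded cases with the $\pm\infty$ conventions and a direct construction of dual functions with arbitrarily negative value at $\beta$. In the finite case, the hard step is to control $z$ away from $\beta$: here I would exploit $\text{int}(K) \neq \emptyset$ to obtain a strictly feasible (Slater) direction together with an associated closedness argument, and then either show that $z$ is real-valued on all of $\mathbb{R}^{m}$ or replace it by a finite superadditive, nondecreasing majorant that still equals the optimal value at $\beta$ and satisfies the column constraints. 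Establishing this finiteness or extension, and pinning down exactly how full-dimensionality supplies it, is where I expect the bulk of the difficulty; once $z$ (or its finite surrogate) is certified to lie in $\Gamma^{m}$, the matching of optimal values is immediate from the definition of $z$ together with \thref{ourWeakDuality}.
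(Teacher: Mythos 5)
Your reduction to the reverse inequality via \thref{ourWeakDuality}, and your verification that the value function $z$ is superadditive, nondecreasing with respect to $K$, and satisfies $z(a^{j}) \geq c_{j}$ and $z(0) = 0$ (given boundedness), are all correct as far as they go. But the proposal has a genuine gap, and you have located it yourself: membership in $\Gamma^{m}$ requires a \emph{real-valued} function on all of $\mathbb{R}^{m}$, while $z(\delta) = -\infty$ wherever CIP($\delta$) is infeasible (which can happen for many $\delta$ even when CIP($\beta$) is feasible and bounded) and possibly $+\infty$ elsewhere. Your plan at this decisive point --- ``exploit $\text{int}(K) \neq \emptyset$ \dots and then either show that $z$ is real-valued \dots or replace it by a finite superadditive, nondecreasing majorant that still equals the optimal value at $\beta$ and satisfies the column constraints'' --- is not an argument but a statement of the theorem's actual content. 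Constructing such a finite surrogate while simultaneously preserving superadditivity, monotonicity with respect to $K$, the constraints $F(a^{j}) \geq c_{j}$, and the value at $\beta$ is precisely the hard part of the strong duality theorem of \cite{Moran2018}; note also that a majorant of $z$ makes $F(a^{j}) \geq c_{j}$ easier but threatens $F(\beta) \leq z(\beta)$, so ``majorize and keep the value at $\beta$'' is exactly the tension that must be resolved, and your sketch does not resolve it. A further unaddressed wrinkle: with $\pm\infty$ values admitted, even stating superadditivity requires a convention for $-\infty + \infty$, so the clean algebra in your second paragraph does not survive the extended-real setting without additional care.

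The paper sidesteps all of this: it never attempts to certify the value function, and instead \emph{cites} the strong duality theorem of \cite{Moran2018} for conic integer programs with free variables and reduces to it. Concretely, it lifts the nonnegativity constraints into the cone, setting $\tilde{K} = K \times \mathbb{R}^{n}_{+}$, $\tilde{A} = [A^{T} \ -I]^{T}$, $\tilde{\beta} = [\beta^{T} \ 0^{T}]^{T}$, applies the cited theorem to obtain a strong dual with \emph{equality} constraints $\tilde{F}(\tilde{a}^{j}) = c_{j}$ over functions on $\mathbb{R}^{m+n}$, and then proves that this dual has the same optimal value as \eqref{MoranDual}: one direction by restriction, $\bar{F}(y) \coloneqq \widehat{F}(y,0)$, which is checked to be feasible for \eqref{MoranDual} with the inequality constraints; the other direction by the standard weak-duality chain against an optimal primal $x^{*}$. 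So the only new content in the paper's proof is this transformation and the equivalence of the equality- and inequality-constrained dual formulations, both routine. If you intend your route as a self-contained proof, you are in effect reproving the theorem of \cite{Moran2018} from scratch, and the step you deferred is where full-dimensionality of $K$ genuinely enters; as written, the proposal does not close it.
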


Proofs of \thref{ourWeakDuality,MoranIPDual} are in the appendix. Earlier statements of \thref{MoranIPDual}, with less general conditions, exist in the literature (e.g., \cite{Moran12}). We remark that \cite{Moran2018} assume that both the primal and dual are feasible for their strong duality result. Hence, a theorem of the alternative for feasibility is not immediate from the dual. 

\section{Theorems of the Alternative for Conic Integer Programs}

In this section, we provide the first theorems of the alternative for conic integer programs. \cite{Dehghanian2016} derive theorems of the alternative for linear integer programs in which either the integer program is feasible, or they construct a superadditive function that certifies infeasibility. Hence, we establish theorems of the alternative for conic integer programs by generalizing the approach of \cite{Dehghanian2016}. That is, we also characterize the feasibility problem as a function of the right-hand side, and develop a method to compute this function. We note, importantly, that our method does not generally apply to non-integral right-hand side vectors. Some of our results apply when the cone $K$ is not full-dimensional, as they only rely on weak duality.
\begin{definition}\thlabel{def_aSets}
For any $\beta \in \mathbb{R}^{m}$, define the sets $\aP(\beta)$ and $\aD(\beta)$ as follows:
\begin{itemize}
\item $\aP(\beta) \coloneqq \{x \setbar Ax \preceq_{K} \beta, x \in \mathbb{Z}^{n}_{+}\}.$
\item $\aD(\beta) \coloneqq \{F \in \Gamma^{m} \setbar F(\beta) < 0, F(a^{j}) \geq 0\}$.
\end{itemize}
\end{definition}
\begin{definition}\thlabel{def_aF}
Define the \textit{feasibility function} $\aF: \mathbb{R}^{m} \to \mathbb{R}$ as follows:
\begin{equation}\label{aF}
\aF(\beta) \coloneqq
\begin{cases*}
0 & if $ \aP(\beta) \neq \emptyset, $ \\
-1 & if $\aP(\beta) = \emptyset$.
\end{cases*}
\end{equation}
\end{definition}

\begin{proposition} \thlabel{schaefer5 extension}
The function $\aF$ is superadditive and nondecreasing with respect to $K$, that is, $\aF \in \Gamma^{m}$.
\end{proposition}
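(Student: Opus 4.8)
The plan is to exploit the fact that $\aF$ takes only the two values $0$ and $-1$, which reduces both claimed properties to elementary statements about the feasible sets $\aP$. The engine behind both arguments is that $K$ is closed under addition (being a convex cone, if $u,v \in K$ then $\tfrac{1}{2}(u+v) \in K$ by convexity, hence $u+v \in K$ by the cone property) together with the fact that $\mathbb{Z}^{n}_{+}$ is closed under addition.

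For the nondecreasing property, I would take $\beta^{1} \preceq_{K} \beta^{2}$ and observe that, since $\aF \in \{0,-1\}$, the inequality $\aF(\beta^{1}) \leq \aF(\beta^{2})$ can fail only when $\aF(\beta^{1}) = 0$ and $\aF(\beta^{2}) = -1$. Hence it suffices to assume $\aP(\beta^{1}) \neq \emptyset$ and deduce $\aP(\beta^{2}) \neq \emptyset$. Picking $x \in \aP(\beta^{1})$, I would write $\beta^{2} - Ax = (\beta^{2} - \beta^{1}) + (\beta^{1} - Ax)$; both summands lie in $K$ (the first by the hypothesis $\beta^{1} \preceq_{K} \beta^{2}$, the second because $x$ is feasible for $\beta^{1}$), so their sum lies in $K$, giving $Ax \preceq_{K} \beta^{2}$ and thus $x \in \aP(\beta^{2})$.

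For superadditivity, since $\aF \geq -1$ everywhere, whenever at least one of $\aF(\beta^{1})$ and $\aF(\beta^{2})$ equals $-1$ we immediately obtain $\aF(\beta^{1}) + \aF(\beta^{2}) \leq -1 \leq \aF(\beta^{1} + \beta^{2})$. Thus the only case requiring work is $\aF(\beta^{1}) = \aF(\beta^{2}) = 0$, where I must show $\aF(\beta^{1} + \beta^{2}) = 0$. Taking $x^{1} \in \aP(\beta^{1})$ and $x^{2} \in \aP(\beta^{2})$, I would set $x = x^{1} + x^{2} \in \mathbb{Z}^{n}_{+}$ and verify $(\beta^{1} + \beta^{2}) - Ax = (\beta^{1} - Ax^{1}) + (\beta^{2} - Ax^{2}) \in K$, so that $x \in \aP(\beta^{1} + \beta^{2})$ and the summed right-hand side is feasible.

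I do not expect a serious obstacle here: the argument is essentially a bookkeeping exercise once one records that $K$ and $\mathbb{Z}^{n}_{+}$ are both closed under addition. The only points warranting an explicit line of care are the justification that a convex cone is closed under vector addition, and the case analysis that uses the finite value set $\{0,-1\}$ of $\aF$ to collapse each inequality into its single nontrivial scenario.
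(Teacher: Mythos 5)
Your proposal is correct and takes essentially the same approach as the paper: both arguments use the two-valued range $\{0,-1\}$ of $\aF$ to reduce each property to its single nontrivial case, and then verify feasibility of $x$ (for monotonicity) and of $x^{1}+x^{2}$ (for superadditivity) via closure of $K$ and $\mathbb{Z}^{n}_{+}$ under addition. The only cosmetic difference is that you handle superadditivity by direct case analysis where the paper argues by contradiction, and you spell out explicitly why a convex cone is closed under addition.
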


\begin{proof} \textbf{Superadditivity:} 
Suppose not. Because $\aF$ takes values of 0 and $-1$ only, it is necessary that there exist $\beta^{1}, \beta^{2} \in \mathbb{R}^{m}$ such that $\aF(\beta^{1} + \beta^{2})=-1$ and $\aF(\beta^{1})=\aF(\beta^{2})=0$. However, in this case there exist $x^{1}, x^{2} \in \mathbb{Z}^{n}_{+}$ such that $Ax^{1} \preceq_K \beta^{1},$ and $Ax^{2} \preceq_K \beta^{2}.$
Notice that $A(x^{1} + x^{2}) \preceq_K (\beta^{1} + \beta^{2}),$ and $(x^{1} + x^{2}) \in \mathbb{Z}^{n}_{+},$ which imply $\aF(\beta^{1} + \beta^{2}) = 0$, a contradiction. Hence $\aF$ is superadditive.

\textbf{Nondecreasing:} Let $\beta^{1}, \beta^{2} \in \mathbb{R}^{m}$ such that $\beta^{1} \succeq_{K} \beta^{2}$. If $\aF(\beta^{2}) = -1$, then we are done, so assume that $\aF(\beta^{2}) = 0$. Thus, there exists $x \in \mathbb{Z}^{n}_{+}$ such that $0 \preceq_{K} \beta^{2} - Ax$. Further, $\beta^{1} \succeq_{K} \beta^{2}$ implies that $\beta^{1} - Ax \succeq_{K} \beta^{2} - Ax \succeq_{K} 0$. Hence, $\aF(\beta^{1}) = 0$ and $\aF$ is nondecreasing. 
\end{proof}

\thref{schaefer5 extension} shows that the feasibility problem for conic integer programs over right-hand sides can be characterized by a superadditive and nondecreasing (with respect to $K$) function. Next, we use the function $\aF$ to prove theorems of the alternative for conic integer programs.

\begin{proposition} \thlabel{schaefer1 extension}
For all $\beta \in \mathbb{R}^{m}$, exactly one of the following holds:
\begin{itemize}
\item $\aP(\beta) \neq \emptyset$. \label{schaefer1Extension1}
\item $\aD(\beta) \neq \emptyset$. \label{schaefer1Extension2}
\end{itemize}
\end{proposition}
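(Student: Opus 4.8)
The plan is to establish the dichotomy by showing that exactly one of the two sets is nonempty. First I would argue that the two alternatives cannot hold simultaneously (mutual exclusivity), which follows directly from weak duality: if $x \in \aP(\beta)$ and $F \in \aD(\beta)$, then $x$ is feasible for CIP($\beta$) and $F$ is feasible for the dual \eqref{MoranDual} with objective value $0$ at the columns (since $F(a^{j}) \geq 0 = c_{j}$ when we take $c = 0$), so \thref{ourWeakDuality} would force $F(\beta) \geq c^{T}x = 0$, contradicting $F(\beta) < 0$. This direction is routine and uses only weak duality, consistent with the paper's remark that such results do not require full-dimensionality.

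The substance is the other direction: whenever $\aP(\beta) = \emptyset$, I must exhibit a function $F \in \aD(\beta)$. The natural candidate is the feasibility function $\aF$ itself. By \thref{schaefer5 extension}, $\aF \in \Gamma^{m}$, so it is superadditive and nondecreasing with respect to $K$, satisfying the membership requirement for $\aD(\beta)$. I would then verify the two defining inequalities of $\aD(\beta)$. Since $\aP(\beta) = \emptyset$, Definition~\ref{def_aF} gives $\aF(\beta) = -1 < 0$, which is exactly the first condition. For the second condition, $\aF(a^{j}) \geq 0$, I observe that $x = e_{j} \in \mathbb{Z}^{n}_{+}$ satisfies $Ae_{j} = a^{j} \preceq_{K} a^{j}$ (because $a^{j} - a^{j} = 0 \in K$), so $\aP(a^{j}) \neq \emptyset$ and hence $\aF(a^{j}) = 0 \geq 0$. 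Thus $\aF \in \aD(\beta)$, establishing the second alternative.

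Combining the two directions yields the ``exactly one'' conclusion: mutual exclusivity rules out both holding, and the constructive argument shows at least one holds, so precisely one does. I do not anticipate a genuine obstacle here, since the hard analytic work—proving $\aF \in \Gamma^{m}$—has already been done in \thref{schaefer5 extension}. The only point requiring mild care is ensuring the column inequalities $\aF(a^{j}) \geq 0$ hold, which I handle by the standard basis vector argument above; everything else reduces to unpacking the definitions of $\aP$, $\aD$, and $\aF$ together with weak duality.
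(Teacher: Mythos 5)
Your proposal is correct and follows essentially the same route as the paper: mutual exclusivity via weak duality (\thref{ourWeakDuality}) applied with zero objective, and nonemptiness of $\aD(\beta)$ in the infeasible case witnessed by $\aF$ itself, using \thref{schaefer5 extension} together with the standard-basis-vector argument $Ae_{j} = a^{j} \preceq_{K} a^{j}$ to get $\aF(a^{j}) \geq 0$. No substantive differences to report.
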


\begin{proof}
First, suppose that  $\aP(\beta) \neq \emptyset$, and further suppose that $\aD(\beta) \neq \emptyset$. Consider the primal conic integer program \eqref{prop1P} and its superadditive dual \eqref{prop1D}:
\begin{align}
&\sup\{0^{T}x \setbar Ax \preceq_{K} \beta, x \in \mathbb{Z}^{n}_{+}\}, \label{prop1P}\\
&\inf\{F(\beta) \setbar F(a^{j}) \geq 0, \forAll j = 1,...,n, F \in \Gamma^{m}\}. \label{prop1D}
\end{align}

Let $\hat{x}$ be feasible for \eqref{prop1P}, and let $\widehat{F}$ be feasible for \eqref{prop1D} such that $\widehat{F}(\beta) < 0$, which exists by assumption. By weak duality (\thref{ourWeakDuality}), $0 = 0^{T}\hat{x} \leq \widehat{F}(\beta) < 0$, a contradiction. Hence, if $\aP(\beta) \neq \emptyset$, $\aD(\beta) = \emptyset$. For the alternative case ($\aP(\beta) = \emptyset$), consider $\aF$ as defined in \eqref{aF}. By \thref{schaefer5 extension}, $\aF \in \Gamma^{m}$. Because $\aP(\beta) = \emptyset$, $\aF(\beta) = -1 < 0$. Additionally, $\aF(a^{j}) \geq 0$ for all $j \in \{1,...,n\}$ because $e_{j} \in \{x \in \mathbb{Z}^{n}_{+} \setbar Ax \preceq_{K} a^{j}\}$. Thus, $\aD(\beta) \neq \emptyset$.
\end{proof}
\begin{corollary}\thlabel{acuteFCorollary}
For all $\beta \in \mathbb{R}^{m}$, the following three statements are equivalent.
\begin{itemize}
\item The set $\aD(\beta)$ is nonempty.
\item $\aF \in \aD(\beta)$. 
\item $\aF(\beta) = -1$. 
\end{itemize}
\end{corollary}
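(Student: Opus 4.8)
The plan is to prove the three-way equivalence by establishing the cycle of implications
$\aD(\beta) \neq \emptyset \Rightarrow \aF(\beta) = -1 \Rightarrow \aF \in \aD(\beta) \Rightarrow \aD(\beta) \neq \emptyset$. The last arrow is immediate and needs no argument: if $\aF$ belongs to $\aD(\beta)$, then $\aD(\beta)$ is nonempty by definition. So the work concentrates on the first two implications.

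For the first implication I would simply invoke \thref{schaefer1 extension}, which states that exactly one of $\aP(\beta) \neq \emptyset$ and $\aD(\beta) \neq \emptyset$ holds. Assuming $\aD(\beta) \neq \emptyset$ thus forces $\aP(\beta) = \emptyset$, and by the definition of $\aF$ in \eqref{aF} this yields $\aF(\beta) = -1$ directly.

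The one step with any content is the middle implication $\aF(\beta) = -1 \Rightarrow \aF \in \aD(\beta)$, where I must verify that the specific function $\aF$ satisfies every defining condition of $\aD(\beta)$. Membership $\aF \in \Gamma^{m}$ is supplied by \thref{schaefer5 extension}. The condition $\aF(\beta) < 0$ follows at once from the hypothesis $\aF(\beta) = -1$. The remaining requirement $\aF(a^{j}) \geq 0$ for each $j$ I would handle exactly as in the proof of \thref{schaefer1 extension}: the standard basis vector $e_{j} \in \mathbb{Z}^{n}_{+}$ satisfies $A e_{j} = a^{j} \preceq_{K} a^{j}$, so $\aP(a^{j}) \neq \emptyset$ and hence $\aF(a^{j}) = 0 \geq 0$. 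With all three conditions met, $\aF \in \aD(\beta)$.

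I do not expect a genuine obstacle, since every ingredient has already been assembled in \thref{schaefer5 extension} and \thref{schaefer1 extension}. The only point requiring care is to confirm that it is the \emph{particular} function $\aF$, rather than merely some abstract element of $\aD(\beta)$, that lies in $\aD(\beta)$ whenever the set is nonempty. This is precisely the content of the middle implication, and it shows that $\aF$ serves as a canonical certificate of infeasibility.
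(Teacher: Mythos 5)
Your proposal is correct and matches the paper's (implicit) argument: the corollary is stated without a separate proof precisely because both nontrivial implications are already contained in the proof of \thref{schaefer1 extension} together with \thref{schaefer5 extension}, which is exactly what you invoke. Your cycle $\aD(\beta) \neq \emptyset \Rightarrow \aF(\beta) = -1 \Rightarrow \aF \in \aD(\beta) \Rightarrow \aD(\beta) \neq \emptyset$ is a clean way to organize those same ingredients, including the $e_{j}$-based verification that $\aF(a^{j}) \geq 0$.
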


\thref{acuteFCorollary} implies that one can determine if $\aD(\beta)$ is nonempty for a range of different right-hand sides by constructing $\aF$ and checking if $\aF(\beta) = -1$ for the right-hand side $\beta$ of interest. Moreover, each of the equivalent statements in \thref{acuteFCorollary} is a valid second statement in the theorem of the alternative, \thref{schaefer1 extension}. 

Similar results hold for conic integer programming with free variables.
\begin{definition}\thlabel{def_gSets}
For any $\beta \in \mathbb{R}^{m}$, define the sets $\gP(\beta)$ and $\gD(\beta)$ as follows:
\begin{itemize}
\item $\gP(\beta) \coloneqq \{x \setbar Ax \preceq_{K} \beta, x \in \mathbb{Z}^{n}\}.$
\item $\gD(\beta) \coloneqq \{F \in \Gamma^{m} \setbar F(\beta) < 0, F(a^{j}) = 0\}$.
\end{itemize}
\end{definition}
\begin{definition}\thlabel{def_gF}
Define the \textit{feasibility function} $\gF: \mathbb{R}^{m} \to \mathbb{R}$ as follows:
\begin{equation}\label{gF}
\gF(\beta) \coloneqq
\begin{cases*}
0 & if $ \gP(\beta) \neq \emptyset, $ \\
-1 & if $\gP(\beta) = \emptyset$.
\end{cases*}
\end{equation}
\end{definition}



\begin{proposition} 
\thlabel{schaefer3 extension}
For all $\beta \in \mathbb{R}^{m}$, exactly one of the following holds:
\begin{itemize}
\item $\gP(\beta) \neq \emptyset$.\label{Schaefer3Extension1}
\item $\gD(\beta) \neq \emptyset$. \label{Schaefer3Extension2}
\end{itemize}
\end{proposition}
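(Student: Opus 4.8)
The plan is to mirror the proof of \thref{schaefer1 extension}, establishing the dichotomy in two halves: that the alternatives are mutually exclusive, and that at least one always holds. The second half is the routine direction. When $\gP(\beta) \neq \emptyset$ there is nothing to prove, so I would suppose $\gP(\beta) = \emptyset$ and exhibit $\gF$ as an explicit member of $\gD(\beta)$. First I would record that $\gF \in \Gamma^{m}$ by repeating verbatim the argument of \thref{schaefer5 extension}: superadditivity can fail only if $\gF(\beta^{1}) = \gF(\beta^{2}) = 0$ while $\gF(\beta^{1} + \beta^{2}) = -1$, but feasible $x^{1}, x^{2} \in \mathbb{Z}^{n}$ sum to a feasible $x^{1} + x^{2} \in \mathbb{Z}^{n}$, and the monotonicity argument is unchanged since the witness $x$ is sign-unconstrained in both cases. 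Then $\gP(\beta) = \emptyset$ gives $\gF(\beta) = -1 < 0$, and for each column $\gF(a^{j}) = 0$ because $e_{j} \in \mathbb{Z}^{n}$ satisfies $Ae_{j} = a^{j} \preceq_{K} a^{j}$; hence $\gF \in \gD(\beta)$.

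For mutual exclusivity I would suppose, for contradiction, that some $\hat{x} \in \gP(\beta)$ and some $\widehat{F} \in \gD(\beta)$ coexist, and derive a weak-duality inequality $\widehat{F}(\beta) \geq 0$ contradicting $\widehat{F}(\beta) < 0$. Since the variables are now free, I would split $\hat{x} = \hat{x}^{+} - \hat{x}^{-}$ with $\hat{x}^{+}, \hat{x}^{-} \in \mathbb{Z}^{n}_{+}$, so that $A\hat{x} = \sum_{j} \hat{x}^{+}_{j} a^{j} + \sum_{j} \hat{x}^{-}_{j}(-a^{j})$; this rewrites the free program as a nonnegative-variable program over the doubled column set $\{a^{j}\} \cup \{-a^{j}\}$, to which the machinery behind \thref{ourWeakDuality} applies. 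Applying superadditivity termwise and then monotonicity along $A\hat{x} \preceq_{K} \beta$ yields
\[
\sum_{j} \hat{x}^{+}_{j}\,\widehat{F}(a^{j}) + \sum_{j} \hat{x}^{-}_{j}\,\widehat{F}(-a^{j}) \ \leq\ \widehat{F}(A\hat{x}) \ \leq\ \widehat{F}(\beta).
\]
With $\widehat{F}(a^{j}) = 0$ the left side collapses to $\sum_{j} \hat{x}^{-}_{j}\,\widehat{F}(-a^{j})$, so the contradiction follows once this quantity is shown to be nonnegative.

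The main obstacle is therefore controlling $\widehat{F}(-a^{j})$, which is precisely the new feature of the free-variable setting. Superadditivity gives only $\widehat{F}(a^{j}) + \widehat{F}(-a^{j}) \leq \widehat{F}(0)$, so with the normalization $\widehat{F}(0) = 0$ inherited from \eqref{MoranDual} one obtains $\widehat{F}(-a^{j}) \leq 0$, which is the wrong sign. I expect the resolution to be that the certificate genuinely relevant to the dichotomy satisfies the two-sided condition $\widehat{F}(a^{j}) = \widehat{F}(-a^{j}) = 0$ forced by applying \eqref{MoranDual} to the doubled column set, and that the constructed certificate $\gF$ already meets it: $-e_{j} \in \mathbb{Z}^{n}$ gives $A(-e_{j}) = -a^{j} \preceq_{K} -a^{j}$, so $\gF(-a^{j}) = 0$ by the same token as $\gF(a^{j}) = 0$. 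Reconciling the single-sided condition $F(a^{j}) = 0$ of \thref{def_gSets} with this two-sided requirement, via $F(0) = 0$ and superadditivity, is the step I would scrutinize most carefully.
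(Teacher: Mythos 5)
Your constructive half is exactly the paper's: when $\gP(\beta) = \emptyset$ you exhibit $\gF \in \gD(\beta)$, with $\gF \in \Gamma^{m}$ by the argument of \thref{schaefer5 extension} and $\gF(a^{j}) = 0$ witnessed by $e_{j}$. The gap in your proposal is that mutual exclusivity is never actually established --- you reduce it to the sign of $\sum_{j}\hat{x}^{-}_{j}\widehat{F}(-a^{j})$ and leave that as an open ``obstacle.'' But your diagnosis of the obstacle is exactly right, and it is fatal to the statement as written, not merely to your attempt. The paper's own proof claims $\widehat{F}(A\hat{x}) \geq \sum_{j}\widehat{F}(a^{j}\hat{x}_{j}) \geq \sum_{j}\widehat{F}(a^{j})\hat{x}_{j}$, but the second inequality follows from superadditivity only when $\hat{x}_{j} \geq 1$: superadditivity forces $F(0) \leq 0$ and $F(-ka^{j}) \leq F(0) - F(ka^{j}) \leq -kF(a^{j})$ for integers $k \geq 1$, which is the \emph{reverse} of what is used. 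Concretely, take $m = n = 1$, $K = \mathbb{R}_{+}$, $A = (1)$, and $F(t) = \min\{0, \lfloor t \rfloor\}$. This $F$ is nondecreasing and superadditive (since $\lfloor s \rfloor + \lfloor t \rfloor \leq \lfloor s+t \rfloor$ and $\min\{0,p\} + \min\{0,q\} \leq \min\{0,p+q\}$), with $F(a^{1}) = F(1) = 0$ and $F(-1) = -1 < 0$, so $F \in \gD(-1)$; yet $x = -1 \in \gP(-1)$. Both alternatives hold at $\beta = -1$, so the proposition is false with $\gD$ as in \thref{def_gSets}. Note that $F(0) = 0$ here, so importing the normalization of \eqref{MoranDual} does not rescue it.

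The repair you conjecture is the correct one: strengthen the dual set to the two-sided condition $F(a^{j}) = F(-a^{j}) = 0$ for all $j$, which is precisely what one gets by running \thref{schaefer1 extension} on the doubled column set $\{\pm a^{j}\}$ after splitting $x = x^{+} - x^{-}$. Exclusivity then follows from your displayed chain: writing $A\hat{x} = \sum_{j}\hat{x}^{+}_{j}a^{j} + \sum_{j}\hat{x}^{-}_{j}(-a^{j})$ and applying superadditivity over the nonzero multiplicities gives $\widehat{F}(\beta) \geq \widehat{F}(A\hat{x}) \geq 0$, with the case $\hat{x} = 0$ covered by $\widehat{F}(0) \geq \widehat{F}(a^{1}) + \widehat{F}(-a^{1}) = 0$. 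The constructive half survives because $-e_{j} \in \mathbb{Z}^{n}$ gives $\gF(-a^{j}) = 0$, as you observe. As for the step you flagged for scrutiny: the one-sided condition cannot be reconciled with the two-sided one via $F(0) = 0$ and superadditivity --- the counterexample above satisfies $F(0) = 0$ and $F(a^{1}) = 0$ but $F(-a^{1}) < 0$ --- so the strengthening is genuinely necessary, not a redundancy that the paper's definition already implies.
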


\begin{proof}
Consider the following primal conic integer program \eqref{prop2P} and its superadditve dual \eqref{prop2D}:
\begin{align}
&\sup\{0^{T}x \setbar Ax \preceq_{K} \beta, x \in \mathbb{Z}^{n}\}, \label{prop2P}\\
&\inf\{F(\beta) \setbar F(a^{j}) = 0, \forAll j = 1,...,n, F \in \Gamma^{m}\}. \label{prop2D}
\end{align}
Suppose $\gP(\beta) \neq \emptyset$ and let $\hat{x}$ be a feasible solution to \eqref{prop2P}. For any dual feasible $\widehat{F}$, $\widehat{F}(\beta) \geq \widehat{F}(A\hat{x})$ because $A\hat{x} \preceq_{K} \beta$ and $\widehat{F}$ is nondecreasing with respect to $K$. By superadditivity, $\widehat{F}(A\hat{x}) \geq \sum\limits_{j = 1}^{n} \widehat{F}(a^{j}\hat{x}_{j}) \geq \sum\limits_{j = 1}^{n}\widehat{F}(a^{j})\hat{x}_{j}.$ Because $\widehat{F}$ is a feasible dual solution, $\widehat{F}(a^{j}) = 0,$ for all $j \in \{1,...,n\}$; hence, $\widehat{F}(\beta) \geq \sum\limits_{j = 1}^{n}\widehat{F}(a^{j})\hat{x}_{j} = 0$. This shows that $\widehat{F}(\beta) \geq 0$ and that $\gD(\beta) = \emptyset$.

Now suppose that $\gP(\beta) = \emptyset$, and consider the function $\gF.$ Notice that for all $j \in \{1,...,n\},$ $\gF(a^{j})=0$ because $Ae_j=a^{j}\preceq_K a^{j},$ and $e_j\in \mathbb{Z}^n_{+}$. Also, because $\gP(\beta) = \emptyset$, $\gF(\beta) = -1$. The rest of the proof is similar to that of \thref{schaefer5 extension}.
\end{proof}

\begin{corollary}\thlabel{graveFCorollary}
For all $\beta \in \mathbb{R}^{m}$, the following three statements are equivalent.
\begin{itemize}
\item The set $\gD(\beta)$ is nonempty.
\item $\gF \in \gD(\beta)$. 
\item $\gF(\beta) = -1$. 
\end{itemize}
\end{corollary}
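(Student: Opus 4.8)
The plan is to prove the three-way equivalence by a short cycle of implications, leaning heavily on facts about $\gF$ that were already extracted in the proof of \thref{schaefer3 extension}, exactly as one would do for the analogous nonnegative-variable statement \thref{acuteFCorollary}. The guiding observation is that two of the three conditions defining membership in $\gD(\beta)$ are properties of $\gF$ that hold for \emph{every} right-hand side, so only one condition actually depends on $\beta$.

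First I would record the two $\beta$-independent facts about $\gF$ established while proving \thref{schaefer3 extension}: that $\gF \in \Gamma^{m}$ (it is superadditive and nondecreasing with respect to $K$, by the argument mirroring \thref{schaefer5 extension}), and that $\gF(a^{j}) = 0$ for every $j \in \{1,\dots,n\}$ (since $Ae_{j} = a^{j} \preceq_{K} a^{j}$ with $e_{j} \in \mathbb{Z}^{n}$). These are precisely two of the three requirements in \thref{def_gSets} for membership in $\gD(\beta)$; the only remaining, $\beta$-dependent, requirement is the strict inequality $\gF(\beta) < 0$. Hence $\gF \in \gD(\beta)$ holds if and only if $\gF(\beta) < 0$, and because $\gF$ takes only the values $0$ and $-1$, this is equivalent to $\gF(\beta) = -1$. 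This already yields the equivalence of the second and third statements.

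Next I would tie in the first statement. The implication that $\gF \in \gD(\beta)$ forces $\gD(\beta) \neq \emptyset$ is immediate, since a set containing an element is nonempty. For the converse I would invoke \thref{schaefer3 extension}: exactly one of $\gP(\beta) \neq \emptyset$ and $\gD(\beta) \neq \emptyset$ holds, so $\gD(\beta) \neq \emptyset$ forces $\gP(\beta) = \emptyset$, whence $\gF(\beta) = -1$ by \thref{def_gF}. Chaining the first statement to the third, the third to the second, and the second back to the first closes the cycle and establishes the full equivalence.

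I do not anticipate a genuine obstacle, since the corollary essentially repackages the proof of \thref{schaefer3 extension}. The one point requiring care is to avoid circularity: I would derive the second–third equivalence purely from \thref{def_gSets}, \thref{def_gF}, and the two-valuedness of $\gF$, and then use the theorem of the alternative (\thref{schaefer3 extension}) only once, to connect nonemptiness of $\gD(\beta)$ with emptiness of $\gP(\beta)$.
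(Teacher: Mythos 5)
Your proof is correct and follows essentially the same route the paper intends: the corollary is stated without a separate proof precisely because, as you observe, the proof of \thref{schaefer3 extension} already establishes the $\beta$-independent facts $\gF \in \Gamma^{m}$ and $\gF(a^{j}) = 0$, leaving only the two-valuedness of $\gF$ and one application of the theorem of the alternative to close the cycle. Your explicit care about avoiding circularity is sound, since \thref{schaefer3 extension} was proved without reference to this corollary.
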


Similar to the nonnegative case, each of the equivalent statements in \thref{graveFCorollary} is a valid second statement in the theorem of the alternative, \thref{schaefer3 extension}.

\section{An Algorithm to Construct $\aF$} \label{constructF}
We construct a certificate of infeasibility for the conic integer program. A conic integer program with $n$ free variables can be converted into a conic integer program with $2n$ nonnegative variables.
Hence, for the remainder of this paper, we focus only on conic integer programs with nonnegative variables, and our algorithm constructs $\aF$. 
Although $\aF$ characterizes the feasibility problem over all of $\mathbb{R}^{m}$, it may be prohibitive to evaluate directly. Therefore, it is beneficial to infer the value of $\aF$ at a certain right-hand side by using previously computed values of $\aF$.

\begin{MyAssum}\thlabel{AisIntegral}
The constraint matrix $A$ is integral.
\end{MyAssum}

In linear and integer programming, the constraint data is often assumed to be rational. Therefore,\thref{AisIntegral} is largely for convenience.

\begin{proposition} \thlabel{schaefer6 extension}
If $K$ has nonempty interior, then for any $\beta \in \mathbb{R}^{m}$, there exists a $\beta'\in \mathbb{Z}^m$ such that $\beta - \beta'\in K$ and $\aF(\beta)=\aF(\beta').$
\end{proposition}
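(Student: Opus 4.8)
The plan is to exploit that $\aF$ takes only the two values $0$ and $-1$ and that it is nondecreasing with respect to $K$ by \thref{schaefer5 extension}. The crucial consequence is that as soon as we produce \emph{any} integer $\beta'$ with $\beta - \beta' \in K$, monotonicity gives $\aF(\beta') \le \aF(\beta)$ for free; the only danger is that rounding down could turn a feasible right-hand side into an infeasible one. I would therefore split the argument according to the value of $\aF(\beta)$.

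In the feasible case $\aF(\beta)=0$, I would simply take $\beta' = A\hat{x}$ for any $\hat{x} \in \aP(\beta)$. Since $A$ is integral (\thref{AisIntegral}) and $\hat{x} \in \mathbb{Z}^n_+$, the vector $\beta' = A\hat{x}$ lies in $\mathbb{Z}^m$; feasibility of $\hat{x}$ gives $A\hat{x} \preceq_K \beta$, i.e.\ $\beta - \beta' \in K$; and $\hat{x} \in \aP(\beta')$ because $A\hat{x} \preceq_K A\hat{x}$, so $\aF(\beta') = 0 = \aF(\beta)$. Note that this case does not use the interior hypothesis at all.

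The interior hypothesis is what I would use in the infeasible case $\aF(\beta) = -1$, where it suffices to exhibit \emph{some} integer $\beta' \in \mathbb{Z}^m$ with $\beta - \beta' \in K$: monotonicity then forces $\aF(\beta') \le -1$, and since $\aF \ge -1$ this yields $\aF(\beta') = -1 = \aF(\beta)$. To locate such a lattice point I would fix an interior direction $d \in \text{int}(K)$ together with $\epsilon > 0$ so that the closed ball $\overline{B}(d,\epsilon) \subseteq K$; scaling the cone gives $\overline{B}(td, t\epsilon) \subseteq K$ for every $t > 0$, and hence $\overline{B}(\beta - td,\, t\epsilon) = \beta - \overline{B}(td, t\epsilon) \subseteq \beta - K$. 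Choosing $t$ large enough that $t\epsilon \ge \tfrac{1}{2}\sqrt{m}$, I would invoke the elementary covering-radius fact that every point of $\mathbb{R}^m$ lies within distance $\tfrac{1}{2}\sqrt{m}$ of $\mathbb{Z}^m$ (round each coordinate to the nearest integer) to conclude that the lattice point $\beta'$ nearest to $\beta - td$ lies in $\overline{B}(\beta - td,\, t\epsilon) \subseteq \beta - K$, so that $\beta - \beta' \in K$.

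The main obstacle is precisely this infeasible case: guaranteeing that a lattice point sits inside the translated cone $\beta - K$. This is where full-dimensionality is indispensable — for a lower-dimensional $K$, such as a ray of irrational slope in $\mathbb{R}^2$, the set $\beta - K$ can avoid $\mathbb{Z}^m$ entirely for generic $\beta$, so no integer $\beta'$ of the required form exists. Everything else is routine once the covering-radius estimate $t\epsilon \ge \tfrac{1}{2}\sqrt{m}$ is secured.
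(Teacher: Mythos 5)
Your proof is correct and takes essentially the same route as the paper: the same case split on the value of $\aF(\beta)$, the identical choice $\beta' = A\hat{x}$ (integral by \thref{AisIntegral}) in the feasible case, and, in the infeasible case, using full-dimensionality of $K$ to locate a lattice point in $\beta - K$ and then monotonicity of $\aF$ to force $\aF(\beta') = -1$. If anything, your explicit ball-scaling and covering-radius argument ($t\epsilon \geq \tfrac{1}{2}\sqrt{m}$) makes rigorous a step the paper states only tersely, namely that the full-dimensional set $\beta - K$ must contain an integral point.
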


\begin{proof} First, suppose $\aF(\beta) = -1$. Let $B = \{\beta - v \setbar v \in K\}$. By hypothesis, $K$ has a nonempty interior, and because $B$ is a translation of $K$, $B$ also has a nonempty interior. Therefore, there exists a rational vector in $B$, which implies that $B$ contains an integral point $\tilde{\beta} = \beta - \beta'$, for some $\beta' \in K$. Thus, $\beta - \beta' \in K$. Because $\aF$ is nondecreasing, $-1 = \aF(\beta) \geq \aF(\beta') \geq -1,$ thus, $\aF(\beta') = -1 = \aF(\beta) = \aF(\beta')$.

Now consider $\aF(\beta) = 0$, which implies the existence of $\hat{x} \in \aP(\beta)$. The matrix $A$ and solution $\hat{x}$ are integral, so $A\hat{x}$ is integral. Trivially, $A\hat{x} \preceq_{K} A\hat{x}$, which implies $\aF(A\hat{x}) = \aF(\beta) = 0$.
\end{proof}

We note that \thref{schaefer6 extension} is similar to \citet[Proposition 6]{Dehghanian2016}, where $\beta'$ is analogous to $\lfloor \beta \rfloor$. The main difference, however, is that \thref{schaefer6 extension} only asserts the existence of such a $\beta'$. In contrast, for \citet[Proposition 6]{Dehghanian2016}, $\lfloor \beta \rfloor$ is easily computed. Consequently, for the remainder of this paper, we focus only on the feasibility problem over integral right-hand sides. The sequel provides a nested approach to compute $\aF(\beta)$ for all $\beta \in \mathbb{Z}^{m}$.

\begin{definition}\thlabel{def_aSetsk}
For any $\beta \in \mathbb{R}^{m}, k \in \mathbb{Z}_{+}$, define the set $\aP^{k}(\beta)$ as 
\begin{align*}
\aP^{k}(\beta) \coloneqq \{x \setbar Ax \preceq_{K} \beta, x \in \mathbb{Z}^{n}_{+}, 1^{T}x \leq k\}.
\end{align*}
\end{definition}

\begin{definition}\thlabel{def_aFk}
For all $k \in \mathbb{Z}_{+}$, define:
\begin{equation}
F^k(\beta) \coloneqq
\begin{cases*}
0 & if $ \aP^{k} \neq \emptyset, $ \\
-1 & if $\aP^{k} = \emptyset$.
\end{cases*}
\end{equation}
\end{definition}

\thref{Schaefer7 extension} proves a nested property for a sequence of functions that characterize a 1-norm constrained feasibility problem, and \thref{Schaefer8 extension} proves that this sequence of functions converges to $\aF$.

\begin{proposition} \thlabel{Schaefer7 extension}
For $k\geq 1,  \aF^k(\beta)=\max\{\aF^{k-1}(\beta), \max\limits_{j \in \{1,...n\}} \aF^{k-1}(\beta-a^{j})\}$.
\end{proposition}

\begin{proof} For any $\beta \in \mathbb{Z}^{m}$ and $k \geq 1$, it is clear that $\aF^{k-1}(\beta) \leq \aF^{k}(\beta)$ as $\aP^{k-1}(\beta) \subseteq \aP^{k}(\beta)$.

Suppose $\aF^{k-1}(\beta) = 0$. Then $0 = \aF^{k-1}(\beta) \leq \aF^{k}(\beta) \leq 0$, so $\aF^{k-1}(\beta) = \aF^{k}(\beta) = 0 = \max\{\aF^{k-1}(\beta), \max\limits_{j \in \{1,...,n\}} \aF^{k-1}(\beta - a^{j})\}$. 

Now suppose that $\aF^{k-1}(\beta) = -1,$ and further suppose that $\aF^{k-1}(\beta - a^{j}) = -1$ for all $j \in \{1,...,n\}$. Then $\max\{\aF^{k-1}(\beta), \max\limits_{j \in \{1,...,n\}} \aF^{k-1}(\beta - a^{j})\} = -1.$ Because $\aF^{k-1}(\beta) = -1,$ for any $x \in \mathbb{Z}^{n}_{+}$ such that $Ax \preceq_{K} \beta, 1^{T}x \leq k,$ we also have that $1^{T}x = k$. Thus, there exists $j^{*} \in \{1,...,n\}$ such that $x_{j^{*}} > 0$. Observe that $Ax = A(x - e_{j^{*}}) + a^{j^{*}} \preceq_{K} \beta \iff A(x - e_{j^{*}}) \preceq_{K} \beta - a^{j^{*}}$. Further, $x - e_{j^{*}} \in \mathbb{Z}^{n}_{+},$ and $1^{T}(x - e_{j^{*}}) = k - 1$, which would imply $\aF^{k-1}(\beta - a^{j^{*}}) = 0$, a contradiction. Thus, $\aP^{k}(\beta) = \emptyset$ and $\aF^{k}(\beta) = -1 = \max\{\aF^{k-1}(\beta), \max\limits_{j \in \{1,...,n\}} \aF^{k-1}(\beta - a^{j})\}$.

Suppose that $\aF^{k-1}(\beta) = -1$ and further suppose that there exists $j^{*}$ such that $\aF^{k-1}(\beta - a^{j^{*}}) = 0$. Then there exists $x \in \mathbb{Z}^{n}_{+}$ such that $Ax \preceq_{K} \beta - a^{j^{*}},$ with $1^{T}x \leq k - 1$. It follows that $x + e_{j^{*}} \in \mathbb{Z}^{n}_{+}, A(x + e_{j^{*}}) = Ax + a^{j^{*}} \preceq_{K} \beta - a^{j^{*}} + a^{j^{*}} = \beta$, and $1^{T}(x + e_{j^{*}}) \leq k$. Thus, $\aF^{k}(\beta) = 0 = \max\{\aF^{k-1}(\beta), \max\limits_{j \in \{1,...,n\}} \aF^{k-1}(\beta - a^{j})\}$.
\end{proof}

\begin{proposition}\thlabel{Schaefer8 extension}
The sequence of functions $\{\aF^{k}\}_{k = 0}^{\infty}$ converges pointwise to $\aF$. 
\end{proposition}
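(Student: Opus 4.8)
The plan is to exploit the monotone nesting of the feasible sets $\aP^{k}(\beta)$ together with the fact that both $\aF^{k}$ and $\aF$ take values only in $\{-1,0\}$, so that pointwise convergence reduces to \emph{eventual equality}. Fix $\beta \in \mathbb{R}^{m}$. From \thref{Schaefer7 extension} (or directly from $\aP^{k-1}(\beta) \subseteq \aP^{k}(\beta)$, which is immediate from the definition), the sequence $\{\aF^{k}(\beta)\}_{k=0}^{\infty}$ is nondecreasing in $k$; and since $\aP^{k}(\beta) \subseteq \aP(\beta)$ for every $k$, it is bounded above by $\aF(\beta)$.

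The key observation I would record next is that $\bigcup_{k \geq 0} \aP^{k}(\beta) = \aP(\beta)$. The inclusion from left to right is clear. For the reverse inclusion, any $x \in \aP(\beta)$ is a single fixed vector in $\mathbb{Z}^{n}_{+}$, so $1^{T}x$ is a finite nonnegative integer; taking $k = 1^{T}x$ places $x \in \aP^{k}(\beta)$. This is the whole crux: no feasible integer point has unbounded $1$-norm, so the $1$-norm truncation eventually captures every feasible point.

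With this in hand the two cases are immediate. If $\aF(\beta) = -1$, then $\aP(\beta) = \emptyset$, hence every $\aP^{k}(\beta) = \emptyset$ and $\aF^{k}(\beta) = -1 = \aF(\beta)$ for all $k$. If $\aF(\beta) = 0$, I would choose a witness $\hat{x} \in \aP(\beta)$; then $\hat{x} \in \aP^{k}(\beta)$ for every $k \geq 1^{T}\hat{x}$, whence $\aF^{k}(\beta) = 0 = \aF(\beta)$ for all such $k$. In either case $\{\aF^{k}(\beta)\}_{k=0}^{\infty}$ is eventually constant and equal to $\aF(\beta)$, which yields the claimed pointwise convergence.

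Because the functions are integer-valued on the discrete range $\{-1,0\}$, there is no genuine analytic obstacle here—no $\varepsilon$–$N$ argument is needed, since convergence of a discrete-valued sequence is simply eventual equality. The only point demanding any care is the set equality $\bigcup_{k} \aP^{k}(\beta) = \aP(\beta)$, and that in turn rests solely on the finiteness of $1^{T}x$ for each feasible $x$, which is automatic.
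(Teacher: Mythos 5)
Your proof is correct and follows essentially the same route as the paper's: both arguments split on the value of $\aF(\beta)$, use a feasible witness $\hat{x}$ with $l = 1^{T}\hat{x}$ to show $\aF^{k}(\beta) = 0$ for all $k \geq l$ in the feasible case, and use $\aP^{k}(\beta) \subseteq \aP(\beta) = \emptyset$ in the infeasible case. Your additional packaging via the set equality $\bigcup_{k} \aP^{k}(\beta) = \aP(\beta)$ and the remark about eventual constancy of a $\{-1,0\}$-valued sequence are harmless elaborations of the same idea, not a different method.
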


\begin{proof} 
Suppose that $\aF(\beta) = 0$. Then there exists $x \in \mathbb{Z}^{n}_{+}$ such that $Ax \preceq_{K} \beta$. Let $l = 1^{T}x$, then $\aF^{l'}(\beta) = 0$ for all $l' \geq l$, and $\lim\limits_{k \to \infty} \aF^{k}(\beta) = 0 = \aF(\beta)$.

Now suppose that $\aF(\beta) = -1$. Then $\emptyset = \aP(\beta) \supseteq \aP^{k}(\beta),$ for all $k \in \mathbb{Z}_{+}$. Thus, $\aF^{k}(\beta) = -1$ for all $k \in \mathbb{Z}_{+}$ and $\lim\limits_{k \to \infty} \aF^{k}(\beta) = -1 = \aF(\beta)$.  
\end{proof}

Define $\bH$ to be the set of right-hand sides over which we wish to determine feasibility, which we assume is a bounded subset of $\mathbb{Z}^{m}$. Algorithm \ref{F_alg_short} briefly summarizes the procedure that is explained throughout this section to construct $\aF^{\bar{k}}$, for a given $\bar{k} \in \mathbb{Z}_{+}$, over $\bH$.  \thref{Schaefer13 extension} computes a finite integer $\bar{k}$ such that $\aF^{\bar{k}} = \aF$; thus, Algorithm \ref{F_alg_short} can compute $\aF$ over $\bH$. Additional details, including a more comprehensive description of the algorithm, are in the appendix.

\begin{algorithm}
\caption{Construction of feasibility certificate $\aF^{\bar{k}}$}\label{F_alg_short}
\begin{algorithmic}[1]
\Procedure{MAIN}{}
\State Given: $A, \bar{k}, \bH, K$
\For{$k = 1,...,\bar{k}$}
    \For{$\beta \in \mathbf{H}$}
        \State $\aF^{k}(\beta) = \max\{\aF^{k-1}(\beta), \max\limits_{j \in \{1,...,n\}} \aF^{k-1}(\beta - a^{j})\}$
    \EndFor
\EndFor
\EndProcedure
\end{algorithmic}
\end{algorithm}

We remark that the method to evaluate $\aF^{k}(\beta) = \max\{\aF^{k-1}(\beta), \max\limits_{j \in \{1,...,n\}} \aF^{k-1}(\beta - a^{j})\}$ depends on properties of $\beta$. For instance, if $\aF^{k-1}(\beta) = 0$, then $\aF^{k}(\beta) = 0$, and no additional computation is needed. In other cases, we use the level-set-minimal vectors of $\aF^{k-1}$ in the evaluation (see \thref{levelSetMinDef}). We show later in this section that the level-set-minimal vectors necessary to construct $\aF^{k}$ can be computed within the algorithm.




We denote the ``extended integers" by $\bar{\mathbb{Z}} \coloneqq \mathbb{Z} \cup \{-\infty,\infty\}.$ 
\begin{definition}\thlabel{levelSetMinDef}\text{ }
\begin{itemize}
\item Define $\bV \coloneqq (\mathbb{Z}^{m} \cap K) \backslash \{0\}.$
\item For each $k\in \mathbb{Z}_{+}$, define $\bB^k \coloneqq \{\beta \in \mathbb{Z}^{m} : \aF^k(\beta)=0, \aF^k(\beta-v)=-1 \text{ for all } v\in \bV\}.$
\item Define ${\bB} \coloneqq \{\beta \in \bar{\mathbb{Z}}^m: \aF(\beta)=0, \aF(\beta-v)=-1 \text{ for all } v \in \bV\}.$
\end{itemize}
\end{definition}

The sets ${\bB}$ and $\bB^{k}$ for $k \in \mathbb{Z}_{+}$ describe a notion of level-set-minimal vectors for $\aF$ and $\aF^{k}, k \in \mathbb{Z}_{+}$, respectively.  \cite{Trapp2013} use level-set-minimal vectors for integer programming value functions with respect to $\mathbb{R}^{m}_{+}$; whereas ${\bB}$ and $\bB^{k}$ are with respect to the cone $K$. In our algorithm, computing $\bB^{k}$ enables one to compute $\beta$ implicitly by attempting to find $\bar{\beta} \in \bB^{k}$ such that $\bar{\beta} \preceq_{K} \beta$. A key difference between our work and that of \cite{Dehghanian2016} is that, using the definitions of ${\bB}$ (respectively $\bB^{k}$) directly, one must check that a vector $\beta$ is minimal by evaluating $\aF$ (respectively $\aF^{k}$) at infinitely many points for the general conic case. The linear integer programming case studied by \cite{Dehghanian2016} only requires $m$ such evaluations. Therefore, the nested procedure that follows is even more vital in the conic setting.

\begin{remark}\thlabel{remarkbB}\text{ }
\begin{itemize}
\item For any $\beta \in \bV$, $\beta \not\in \bB^{k},$ for all $k \in \mathbb{Z}_{+}$. This follows from the fact that $\aF^{k}(\beta - \beta) = \aF^{k}(0) = 0$ for all $k \in \mathbb{Z}_{+}$. 
\item For any $\beta$, $\aP^{0}(\beta) = \{0\}$. Thus, for any $\beta \in \mathbb{Z}^{m}$ such that $\aF^{0}(\beta) = 0, \beta \in K$. Also, for any $\beta \in \bV$, $\aF^{0}(\beta - \beta) = 0$, which implies that $\bB^{0} = \{0\}$.
\end{itemize}
\end{remark}

\thref{Schaefer9 extension} determines $\aF^{k}(\beta)$ using the sets $\bB^{k}$.

\begin{proposition} \thlabel{Schaefer9 extension}
For each $\beta \in \mathbb{Z}^m, k\in \mathbb{Z}_+$, $\aF^k(\beta)=0$ if and only if there exists a $\bar{\beta} \in \bB^{k}$ such that $\bar{\beta}\preceq_K \beta$. 
\end{proposition}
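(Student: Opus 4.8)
The plan is to prove the two directions separately, relying on monotonicity for the reverse implication and on a finiteness-plus-maximality argument for the forward one. First I would record that $\aF^{k}$ is nondecreasing with respect to $K$, by exactly the argument used for $\aF$ in \thref{schaefer5 extension}: if $\beta^{2} \preceq_{K} \beta^{1}$ and $\aF^{k}(\beta^{2})=0$, then any $x \in \aP^{k}(\beta^{2})$ satisfies $Ax \preceq_{K} \beta^{2} \preceq_{K} \beta^{1}$ while the constraints $x \in \mathbb{Z}^{n}_{+}$ and $1^{T}x \le k$ are untouched, so $x \in \aP^{k}(\beta^{1})$ and $\aF^{k}(\beta^{1})=0$. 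The reverse direction is then immediate: given $\bar{\beta} \in \bB^{k}$ with $\bar{\beta} \preceq_{K} \beta$, the definition of $\bB^{k}$ gives $\aF^{k}(\bar{\beta})=0$, and monotonicity yields $\aF^{k}(\beta) \ge \aF^{k}(\bar{\beta})=0$, hence $\aF^{k}(\beta)=0$.

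For the forward direction, assume $\aF^{k}(\beta)=0$. The idea is to descend from $\beta$ by subtracting integral vectors of $K$ as far as possible while keeping the feasibility value at $0$, and to show the descent halts at a level-set-minimal point. Concretely, I would define the set of admissible descents $S \coloneqq \{v \in K \cap \mathbb{Z}^{m} : \aF^{k}(\beta - v)=0\}$. Since $\aF^{k}(\beta)=0$, we have $0 \in S$, so $S \neq \emptyset$. I then let $v^{*}$ be a $\preceq_{K}$-maximal element of $S$ and set $\bar{\beta} \coloneqq \beta - v^{*}$.

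The crux is establishing that $S$ is finite, so that a maximal element exists. If $v \in S$, there is some $x \in \mathbb{Z}^{n}_{+}$ with $1^{T}x \le k$ and $Ax \preceq_{K} \beta - v$, which rearranges to $0 \preceq_{K} v \preceq_{K} \beta - Ax$; thus $v$ lies in the order interval $K \cap \bigl((\beta - Ax) - K\bigr)$. Because $K$ is closed and pointed, every such order interval is bounded: an unbounded sequence in $K \cap (\gamma - K)$ would, after normalization, produce a unit direction $d$ with both $d \in K$ and $-d \in K$, contradicting pointedness. As there are only finitely many vectors $Ax$ with $x \in \mathbb{Z}^{n}_{+},\ 1^{T}x \le k$, the set $S$ is contained in a finite union of bounded sets and, being integral, is finite. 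Hence a $\preceq_{K}$-maximal $v^{*} \in S$ exists. I expect this boundedness-of-order-intervals step to be the main obstacle, since it is exactly where pointedness of $K$ enters and where the present conic setting genuinely differs from the linear case of \cite{Dehghanian2016}, in which the $\mathbb{R}^{m}_{+}$-order intervals are boxes and finiteness is transparent.

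It remains to verify $\bar{\beta} \in \bB^{k}$ and $\bar{\beta} \preceq_{K} \beta$. We have $\bar{\beta} \in \mathbb{Z}^{m}$ and $\aF^{k}(\bar{\beta})=0$ because $v^{*} \in S$, and $v^{*} \in K$ gives $\bar{\beta} \preceq_{K} \beta$. For any $w \in \bV$, the vector $v^{*}+w$ lies in $K \cap \mathbb{Z}^{m}$ and strictly dominates $v^{*}$ (as $w \neq 0$ and $K$ is pointed), so maximality of $v^{*}$ forces $v^{*}+w \notin S$, i.e.\ $\aF^{k}(\bar{\beta}-w)=\aF^{k}(\beta - v^{*} - w)=-1$. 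Since this holds for all $w \in \bV$, we conclude $\bar{\beta} \in \bB^{k}$, which completes the argument.
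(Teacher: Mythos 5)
Your proof is correct, and it takes a genuinely different route from the paper's. The paper argues in the image space: it forms the set $\cA^{k} = \{Ax \setbar x \in \mathbb{Z}^{n}_{+},\ 1^{T}x \leq k,\ Ax \preceq_{K} \beta\}$, which is finite simply because $\{x \in \mathbb{Z}^{n}_{+} \setbar 1^{T}x \leq k\}$ is finite, takes a $\preceq_{K}$-minimal element $A\bar{x}$ of it, and shows $A\bar{x} \in \bB^{k}$; no topological input is needed, and the witness is exhibited explicitly in the form $Ax$, which presages \thref{Schaefer10 extension} and the finiteness of $\bB^{k}$ in \thref{Schaefer10 extension corollary}. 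You instead argue in right-hand-side space, maximizing the integral descent $v \in K \cap \mathbb{Z}^{m}$ with $\aF^{k}(\beta - v) = 0$, and this forces you to supply an extra compactness lemma --- that the order interval $K \cap (\gamma - K)$ of a closed pointed cone is bounded --- which you correctly flagged as the crux and proved by the standard normalization argument (closedness yields a limit direction $d \in K$, and $-d \in K$ then contradicts pointedness); that lemma is precisely what the paper's image-space choice sidesteps. In exchange, your route is leaner elsewhere: your converse is pure monotonicity of $\aF^{k}$, whereas the paper reconstructs a feasible $\hat{x}$ with $A\hat{x} = \bar{\beta}$ (a deduction that itself leans on integrality of $A$); and your forward direction never invokes \thref{AisIntegral} at all, since $\bar{\beta} = \beta - v^{*}$ is integral because $\beta$ and $v^{*}$ are, while the paper needs $A$ integral to place $A\bar{x}$ in $\bB^{k} \subseteq \mathbb{Z}^{m}$ --- so your argument is marginally more general on this point (a posteriori, under \thref{AisIntegral}, your witness coincides with some $Ax$ anyway, since otherwise $\bar{\beta} - Ax \in \bV$ would contradict $\bar{\beta} \in \bB^{k}$). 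One small remark: pointedness of $K$ enters your proof not only in the boundedness lemma but also in making $\preceq_{K}$ antisymmetric, hence in guaranteeing that your finite nonempty set $S$ has a $\preceq_{K}$-maximal element; your parenthetical crediting pointedness for the strict domination of $v^{*}$ by $v^{*} + w$ is harmless, but that strictness needs only $w \neq 0$.
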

\begin{proof} 
Fix $k \in \mathbb{Z}^{n}_{+}$. For each $x \in \mathbb{Z}^{n}_{+}$ such that $1^{T}x \leq k$, let $\mathbf{Q}_{x}^{k} = \{v \in \mathbb{R}^{m} \setbar Ax \preceq_{K} v\}$. Also, define $\cX^{k} = \{x \in \mathbb{Z}^{n}_{+} \setbar 1^{T}x \leq k, \beta \in \mathbf{Q}_{x}^{k}\},$ and $\cA^{k} = \{Ax \setbar x \in \cX^{k}\}$. Notice that $\aF^{k}(\beta) = 0$ if and only if  $\cX^{k} \neq \emptyset$, and thus, $\aF^{k}(\beta) = 0$ if and only if $\cA^{k} \neq \emptyset$. In addition, $n < \infty,$ implies that $\cX^{k}$ is a finite set, which also implies that $\cA^{k}$ is a finite set. 

Because $\preceq_{K}$ defines a partial order on $\mathbb{R}^{m}$, $\aF^{k}(\beta) = 0$ if and only if there exists $A\bar{x}$ with $\bar{x} \in \cX^{k}$ such that: for all $z \in \cX^{k}$, $Az \preceq_{K} A\bar{x}$ if and only if $Az = A\bar{x}$. Trivially, $\aF^{k}(A\bar{x}) = 0$ and $A\bar{x} \in \bB^{k}$.

We now prove the statement. Suppose that $\aF^{k}(\beta) = 0$. Consider any $v \in \bV$. Suppose that $\aF^{k}(A\bar{x} - v) = 0$ with $\bar{x}$ as defined above. Then there exists $\tilde{x} \in \mathbb{Z}^{n}_{+}$ such that $A\tilde{x} \preceq_{K} A\bar{x} - v,$ and $1^{T}\tilde{x} \leq k$. Thus, $A\tilde{x} \preceq_{K} A\bar{x}$ but $A\tilde{x} \neq A\bar{x}$, a contradiction. Hence, $\aF^{k}(\beta) = 0$ implies the existence of $\bar{\beta} \in \bB^{k}$---namely, $\bar{\beta} = A\bar{x}$---such that $\bar{\beta} \preceq_{K} \beta$.

To prove the other direction, suppose that there exists $\bar{\beta} \in \bB^{k}$ such that $\bar{\beta} \preceq_{K} \beta$. Then there exists $\hat{x} \in \mathbb{Z}^{n}_{+}$ such that $A\hat{x} \preceq_{K} \bar{\beta}$, $1^{T}\hat{x} \leq k,$ and $A\hat{x} = \bar{\beta}$ because $\aF^{k}(\bar{\beta} - v) = -1$, for all $v \in \bV$. Thus $A\hat{x} = \bar{\beta} \preceq_{K} \beta$ so $\hat{x} \in \cX^{k}$ such that $Az \preceq_{K} A\hat{x}$ if and only if $Az = A\hat{x}$, for all $z \in \cX^{k}$. 
%
\end{proof}

\begin{proposition} \thlabel{Schaefer10 extension} \text{ }
\begin{enumerate}[series = schaef10, label = (\ref{Schaefer10 extension}.\textbf{\alph*})]
\item For each $\beta \in \mathbb{Z}^m$, $\beta\in \bB^k$ if and only if both $\aF^k(\beta)=0$ and for each $\bar{x}\in \aP^{k}(\beta)$, $A\bar{x}=\beta$. \label{Schaefer10extension1}
\item For each $\beta \in \mathbb{Z}^m, \beta \in \boldmath{{\bB}}$ if and only if both $\aF(\beta)=0$ and for each $\bar{x}\in \aP(\beta)$, $A\bar{x}=\beta$. \label{Schaefer10extension2}
\end{enumerate}
\end{proposition}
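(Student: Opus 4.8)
The plan is to prove each biconditional directly from \thref{levelSetMinDef}, establishing part (a) in full and then observing that part (b) is the verbatim analogue obtained by deleting the cardinality constraint $1^{T}x \le k$. In both parts the two load-bearing facts are \thref{AisIntegral} (integrality of $A$, so that a difference $\beta - A\bar{x}$ lands in $\mathbb{Z}^{m}$ and hence in $\bV$) and the pointedness of $K$ (so that $v \in K$ together with $-v \in K$ forces $v = 0$). Much of the forward direction already appears implicitly inside the proof of \thref{Schaefer9 extension}, but I would give a clean standalone argument.

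For the forward direction of (a), I would fix $\beta \in \bB^{k}$. The condition $\aF^{k}(\beta) = 0$ is immediate from the definition of $\bB^{k}$, so it remains to show that each $\bar{x} \in \aP^{k}(\beta)$ satisfies $A\bar{x} = \beta$. I would argue by contradiction: if $A\bar{x} \neq \beta$, set $v \coloneqq \beta - A\bar{x}$. Since $A\bar{x} \preceq_{K} \beta$ we have $v \in K$; since $A$ and $\beta$ are integral, $v \in \mathbb{Z}^{m}$; and $v \neq 0$; hence $v \in \bV$. But $\bar{x}$ itself certifies $\aF^{k}(\beta - v) = \aF^{k}(A\bar{x}) = 0$, because $A\bar{x} \preceq_{K} A\bar{x}$, $\bar{x} \in \mathbb{Z}^{n}_{+}$, and $1^{T}\bar{x} \le k$. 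This contradicts $\aF^{k}(\beta - v) = -1$, which holds by membership in $\bB^{k}$.

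For the reverse direction of (a), I would assume both $\aF^{k}(\beta) = 0$ and the stated minimality of solutions, and show $\aF^{k}(\beta - v) = -1$ for every $v \in \bV$. Suppose instead $\aF^{k}(\beta - v) = 0$ for some $v \in \bV$; then there is some $\hat{x} \in \aP^{k}(\beta - v)$. Because $v \in K$ gives $\beta - v \preceq_{K} \beta$, transitivity of $\preceq_{K}$ yields $A\hat{x} \preceq_{K} \beta$, so $\hat{x} \in \aP^{k}(\beta)$ and the hypothesis forces $A\hat{x} = \beta$. Then $\beta - v - A\hat{x} = -v \in K$, while also $v \in \bV \subseteq K$; pointedness of $K$ gives $v = 0$, contradicting $v \in \bV$. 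Hence $\beta \in \bB^{k}$.

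Finally, for part (b) I would replay both directions after replacing $\aF^{k}, \aP^{k}, \bB^{k}$ by $\aF, \aP, \bB$ and dropping every occurrence of $1^{T}x \le k$; the witness constructions and the pointedness argument are unchanged. The only point meriting a remark is that part (b) quantifies over $\beta \in \mathbb{Z}^{m}$, so although $\bB$ is defined over the extended lattice $\bar{\mathbb{Z}}^{m}$, no infinite coordinates arise and $\aF$ is evaluated only at finite integral points, keeping everything well defined. I do not expect a genuine obstacle; the one place demanding care is the forward direction, where the appeal to \thref{AisIntegral} is essential to guarantee $\beta - A\bar{x} \in \bV$ rather than merely in $K \setminus \{0\}$, and, in the reverse direction, the explicit invocation of pointedness.
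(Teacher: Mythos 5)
Your proof is correct and takes essentially the same route as the paper's: the forward direction constructs the identical witness $\bar{\beta} = A\bar{x} \preceq_{K} \beta$ and uses the integrality of $A$ and $\bar{x}$ to place $\beta - A\bar{x}$ in $\bV$, while your reverse direction's explicit pointedness argument ($v \in K$ and $-v \in K$ force $v = 0$) is just an unwound form of the paper's appeal to the antisymmetry of the partial order $\preceq_{K}$. The paper likewise dispatches part (b) by noting the proof is similar, so there is nothing to add.
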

\proof 
(\ref{Schaefer10extension1} $\Rightarrow$):
Because $\beta \in \bB^k$, $\aF^k(\beta)=0$. Suppose there exists an $\bar{x}\in \aP^{k}(\beta)$ such that $A\bar{x}\neq \beta$. Then $\beta-A\bar{x}\in K,\beta-A\bar{x}\neq 0, \bar{x} \in \mathbb{Z}^n_+$. However, if we set $\bar{\beta}=A\bar{x}$, it is clear that $\bar{\beta}\preceq_K \beta$, and because $A$ and $\bar{x}$ are integral, $\bar{\beta} \in \mathbb{Z}^m$. Moreover, $\aF^k(\bar{\beta})=0$ because $\bar{x}$ is a solution to CIP($\bar{\beta}$). However, by the definition of $\bB^k$, this implies that $\beta \not \in \bB^k$. This is a contradiction. 

(\ref{Schaefer10extension1} $\Leftarrow$):
Suppose that $\aF^k(\beta)=0$ and for each $\bar{x}\in \aP^{k}(\beta)$, $A\bar{x}=\beta$ but $\beta \not \in \bB^k$. By the definition of $\bB^k$, there exists an integral $\bar{\beta} \neq \beta$ such that $\bar{\beta} \preceq_K \beta$ and $\aF^k(\beta)=0$. This implies that there exists an $\bar{x}$ satisfying $A\bar{x} \preceq_K \bar{\beta} \preceq_K \beta, \boldmath{1}^T\bar{x}\leq k, \bar{x}\in \mathbb{Z}^n_+$. Because $A\bar{x}= \beta$, $\beta \preceq_K\bar{\beta}$, thus, $\beta=\bar{\beta}$. However, this is a contradiction, which implies the reverse implication holds as well.

The proof of \ref{Schaefer10extension2} is similar. \qed

\begin{corollary} \thlabel{Schaefer10 extension corollary}
For each $k\in \mathbb{Z}_+$, $\bB^k$ is finite.
\end{corollary}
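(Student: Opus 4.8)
The plan is to exploit the characterization of $\bB^{k}$ provided by \thref{Schaefer10 extension} to show that every element of $\bB^{k}$ is the image under $A$ of a nonnegative integer vector whose coordinate sum is at most $k$, and then to observe that only finitely many such vectors exist.

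First I would take an arbitrary $\beta \in \bB^{k}$. By \thref{Schaefer10 extension}, part (a), we have $\aF^{k}(\beta) = 0$ and $A\bar{x} = \beta$ for every $\bar{x} \in \aP^{k}(\beta)$. Since $\aF^{k}(\beta) = 0$ forces $\aP^{k}(\beta) \neq \emptyset$ by \thref{def_aFk}, I can select some $\bar{x} \in \aP^{k}(\beta)$; for this $\bar{x}$ we have $\beta = A\bar{x}$ with $\bar{x} \in \mathbb{Z}^{n}_{+}$ and $1^{T}\bar{x} \leq k$ by \thref{def_aSetsk}. Thus $\bB^{k}$ is contained in the set $\{Ax : x \in \mathbb{Z}^{n}_{+},\ 1^{T}x \leq k\}$.

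Next I would argue that the indexing set $S \coloneqq \{x \in \mathbb{Z}^{n}_{+} : 1^{T}x \leq k\}$ is finite: each coordinate of any $x \in S$ is a nonnegative integer bounded above by $k$, so $|S| \leq (k+1)^{n} < \infty$. Consequently the image $\{Ax : x \in S\}$ under the linear map $A$ is also finite, and therefore so is its subset $\bB^{k}$. This completes the argument.

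I do not anticipate a genuine obstacle here; the content of the corollary is essentially the counting step, and all the real work has already been done in \thref{Schaefer10 extension}. The only point requiring care is to invoke the correct direction of that proposition, so that membership in $\bB^{k}$ genuinely yields the representation $\beta = A\bar{x}$ with $1^{T}\bar{x} \leq k$; once this representation is in hand, finiteness is immediate. Equivalently, one could read off the same conclusion from the proof of \thref{Schaefer9 extension}, in which each $\bar{\beta} \in \bB^{k}$ is realized as a maximal element $A\bar{x}$ of the finite set $\cA^{k}$.
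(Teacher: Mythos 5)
Your proposal is correct and follows essentially the same route as the paper: both use \thref{Schaefer10 extension} to show $\bB^{k} \subseteq \{Ax \setbar x \in \mathbb{Z}^{n}_{+},\ 1^{T}x \leq k\}$ and then conclude by finiteness of that image set. Your version is in fact slightly cleaner, since you make explicit the bound $|\{x \in \mathbb{Z}^{n}_{+} : 1^{T}x \leq k\}| \leq (k+1)^{n}$ and correctly write $\aP^{k}(\beta)$ where the paper's proof loosely writes $\{x \in \mathbb{Z}^{n}_{+} \setbar 1^{T}x \leq k\}$.
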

\begin{proof} Observe that because $|\{x \in \mathbb{Z}^{n}_{+} \setbar 1^{T}x \leq k\}| < \infty$, we also have $|\{Ax \setbar x \in \mathbb{Z}^{n}_{+}, 1^{T}x \leq k\}| < \infty$. 
Let $\beta \in \mathbf{B}^{k},$ then $\aF^{k}(\beta) = 0$ and $\{x \in \mathbb{Z}^{n}_{+} \setbar 1^{T}x \leq k\} \neq \emptyset$. By \thref{Schaefer10 extension}, if $\bar{x} \in \{x \in \mathbb{Z}^{n}_{+} \setbar 1^{T}x \leq k\}$, then $A\bar{x} = \beta$. Hence, $\mathbf{B}^{k} \subseteq \{Ax \setbar x \in \mathbb{Z}^{n}_{+}, 1^{T}x \leq k\}$, and so $\mathbf{B}^{k}$ is finite. 
\end{proof}

The fact that the sets $\bB^{k}$ are finite implies that one may search through the level-set-minimal vectors as part of a finite algorithm. However, one must still construct the sets $\bB^{k}$; as stated earlier, verifying that $\beta \in \bB^{k}$ directly can require determining if $\aF^{k}(\beta - v) = -1$ for all $v$ in the set $\mathbf{V}$, which is countably infinite in general. Thus, we also construct $\bB^{k}$ through a finite nested procedure.

For any $k \in \mathbb{Z}_{+}, j \in \{1,...,n\}$, let $\bB^{k} + a^{j} \coloneqq \{\beta \in \mathbb{Z}^{m} \setbar \beta - a^{j} \in \bB^{k}\}$. If $\beta - a^{j} \in \bB^{k}$, or if $\beta \in \bB^{k}$, then $\beta$ may belong to $\bB^{k+1}$; \thref{def:Ck,Schaefer11 extension} address this notion formally.

\begin{definition}\thlabel{def:Ck}
For each $k \in \mathbb{Z}_{+}$, define $\mathbf{C}^{k} \coloneqq \{\beta \in \bB^{k} \bigcup\limits_{j = 1}^{n} (\bB^{k} + a^{j}) \setbar \aF^{k}(\beta) = 0 \Rightarrow \beta \in \bB^{k} \text{ and } \aF^{k}(\beta - a^{\ell}) = 0 \Rightarrow \beta - a^{\ell} \in \bB^{k}, \forAll \ell \in \{1,...,n\}\}.$
\end{definition}

\thref{Schaefer11 extension} states that one can identify the set of level-set-minimal vectors $\bB^{k}$ with a nested procedure. The construction of the sets $\mathbf{C}^{k}$ constitutes an intermediate step in this procedure.

\begin{proposition} \thlabel{Schaefer11 extension}
For each integer $k\geq 1$, $\mathbf{B}^{k}=\{\beta \in \mathbf{C}^{k-1} \setbar \aF^k(\beta)=0\}.$
\end{proposition}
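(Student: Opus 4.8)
The plan is to prove the two inclusions separately, using the recursion $\aF^{k}(\beta)=\max\{\aF^{k-1}(\beta),\max_{j}\aF^{k-1}(\beta-a^{j})\}$ from \thref{Schaefer7 extension} as the main engine, together with the pointwise bound $\aF^{k-1}(\gamma)\le\aF^{k}(\gamma)$ (immediate from $\aP^{k-1}(\gamma)\subseteq\aP^{k}(\gamma)$) and the fact that each $\aF^{k}$ is nondecreasing with respect to $K$ (by the argument in the proof of \thref{schaefer5 extension}). Since $\aF^{k}$ takes only the values $0$ and $-1$, a maximum equal to $0$ means at least one entry equals $0$, which is what repeatedly drives the case analysis.

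For the inclusion $\bB^{k}\subseteq\{\beta\in\mathbf{C}^{k-1}\setbar\aF^{k}(\beta)=0\}$, I would fix $\beta\in\bB^{k}$; then $\aF^{k}(\beta)=0$ is immediate, and it remains to verify the three requirements of \thref{def:Ck}. First I would establish the two implications. If $\aF^{k-1}(\beta)=0$ but $\beta\notin\bB^{k-1}$, then $\aF^{k-1}(\beta-v)=0$ for some $v\in\bV$, so $\aF^{k}(\beta-v)\ge\aF^{k-1}(\beta-v)=0$, contradicting $\beta\in\bB^{k}$; hence $\aF^{k-1}(\beta)=0\Rightarrow\beta\in\bB^{k-1}$. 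Likewise, if $\aF^{k-1}(\beta-a^{\ell})=0$ but $\beta-a^{\ell}\notin\bB^{k-1}$, then $\aF^{k-1}(\beta-a^{\ell}-v)=0$ for some $v\in\bV$, and applying the recursion at $\beta-v$ gives $\aF^{k}(\beta-v)\ge\aF^{k-1}((\beta-v)-a^{\ell})=0$, again contradicting $\beta\in\bB^{k}$. The remaining membership $\beta\in\bB^{k-1}\cup\bigcup_{j}(\bB^{k-1}+a^{j})$ then follows from the recursion: $\aF^{k}(\beta)=0$ forces either $\aF^{k-1}(\beta)=0$ (so $\beta\in\bB^{k-1}$ by the first implication) or $\aF^{k-1}(\beta-a^{j})=0$ for some $j$ (so $\beta-a^{j}\in\bB^{k-1}$, i.e. $\beta\in\bB^{k-1}+a^{j}$, by the second).

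For the reverse inclusion, I would fix $\beta\in\mathbf{C}^{k-1}$ with $\aF^{k}(\beta)=0$ and show $\aF^{k}(\beta-v)=-1$ for every $v\in\bV$. Suppose instead $\aF^{k}(\beta-v)=0$ for some $v\in\bV$. The recursion at $\beta-v$ forces either $\aF^{k-1}(\beta-v)=0$ or $\aF^{k-1}(\beta-v-a^{j})=0$ for some $j$. In the first case, $v\in K$ gives $\beta-v\preceq_{K}\beta$, so $K$-monotonicity yields $\aF^{k-1}(\beta)=0$; the first $\mathbf{C}^{k-1}$ implication gives $\beta\in\bB^{k-1}$, which forces $\aF^{k-1}(\beta-v)=-1$, a contradiction. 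In the second case, $\beta-a^{j}-v\preceq_{K}\beta-a^{j}$ gives $\aF^{k-1}(\beta-a^{j})=0$, so the second implication yields $\beta-a^{j}\in\bB^{k-1}$ and hence $\aF^{k-1}((\beta-a^{j})-v)=-1$, again a contradiction. Both cases being impossible, $\beta\in\bB^{k}$.

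I expect the main obstacle to be the bookkeeping in the reverse direction: the columns $a^{j}$ need not lie in $\bV$, so one cannot treat $\beta-a^{j}$ as a shift of $\beta$ by a cone vector, and the argument must route through the defining implications of $\mathbf{C}^{k-1}$ only after promoting the hypothesis $\aF^{k-1}(\beta-a^{j}-v)=0$ to $\aF^{k-1}(\beta-a^{j})=0$ via $K$-monotonicity. Keeping straight which shifts are by $v\in\bV$ (where the minimality defining $\bB^{k-1}$ may be invoked) and which are by $a^{j}$ (where only the recursion and the $\mathbf{C}^{k-1}$ implications are available) is the crux; the rest is a direct application of \thref{Schaefer7 extension}.
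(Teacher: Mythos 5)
Your proposal is correct and takes essentially the same approach as the paper: both inclusions are driven by the recursion of \thref{Schaefer7 extension}, the $K$-monotonicity of $\aF^{k-1}$, and the two defining implications of $\mathbf{C}^{k-1}$, exactly as in the paper's proof. The only differences are cosmetic---you inline the content of \thref{BkRelationship} as a contradiction argument rather than citing it, and you run the reverse inclusion by contradiction where the paper does a direct case analysis on the values of $\aF^{k-1}(\beta')$ and $\aF^{k-1}(\beta'-a^{j})$.
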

\proof ($\subseteq$): 
Suppose $\beta' \in \bB^{k}$. It is immediate that $\aF^{k}(\beta') = 0$. Hence, we first show that $\aF^{k-1}(\beta') = 0 \Rightarrow \beta' \in \bB^{k-1} \text{ and } \aF^{k-1}(\beta' - a^{j}) = 0 \Rightarrow \beta' - a^{j} \in \bB^{k-1}, \forAll j \in \{1,...,n\},$ and then we show that $\beta' \in \bB^{k-1} \bigcup\limits_{j = 1}^{n} (\bB^{k-1} + a^{j})$, which together imply that $\beta \in \mathbf{C}^{k-1}$.

Because $\beta' \in \bB^{k}$, $\aF^{k}(\beta' - v) = -1,$ for all $v \in \bV$. By \thref{Schaefer7 extension}, $\aF^{k}(\beta' - v) \geq \aF^{k-1}(\beta' - v - a^{j})$, for all $j \in \{1,...,n\}$. Thus, $\aF^{k-1}(\beta' - v - a^{j}) = -1$, for all $v \in \bV$, $j \in \{1,...,n\}$. This implies that for any $j$ such that $\aF^{k-1}(\beta' - a^{j}) = 0, \beta' - a^{j} \in \bB^{k-1}$. Also, because $\beta' \in \bB^{k}$, by \thref{BkRelationship}, if $\aF^{k-1}(\beta') = 0$, then $\beta' \in \bB^{k-1}$.

We now show that $\beta' \in \bB^{k-1} \bigcup\limits_{j = 1}^{n}(\bB^{k-1} + a^{j})$. Because $\beta' \in \bB^{k},$ we have $\aF^{k}(\beta') = 0$, which implies by \thref{Schaefer7 extension}, at least one of the following holds: $\aF^{k-1}(\beta') = 0$, or $\aF^{k-1}(\beta' - a^{j}) = 0$ for some $j \in \{1,...n\}$. As shown above, if $\aF^{k-1}(\beta') = 0,$ then $\beta' \in \bB^{k-1}$, and if, for some $j \in \{1,...,n\},$ $\aF^{k-1}(\beta' - a^{j}) = 0$, then $\beta' - a^{j} \in \bB^{k-1}$. Thus, $\beta' \in \bB^{k-1} \bigcup\limits_{j = 1}^{n} (\bB^{k-1} + a^{j})$.
%
%
%
Moreover, by the definition of $\bB^{k}$, $\aF^{k}(\beta') = 0$. Hence, $\beta' \in \bB^{k}$ implies that $\beta' \in \{\beta \in \mathbf{C}^{k-1} \setbar \aF^{k}(\beta) = 0\}$.

\noindent ($\supseteq$): Suppose there exists $\beta' \in \mathbf{C}^{k-1}$ such that $\aF^{k}(\beta') = 0$. 

Consider $j \in \{1,...,n\}$ and suppose that $\aF^{k-1}(\beta' - a^{j}) = 0$. Then $\aF^{k-1}(\beta' - a^{j} - v) = -1$, for all $v \in \bV$ because $\beta' - a^{j} \in \bB^{k-1}$ (due to $\beta' \in \mathbf{C}^{k-1})$. If instead, $\aF^{k - 1}(\beta' - a^{j}) = -1,$ then $-1 = \aF^{k - 1}(\beta' - a^{j}) \geq \aF^{k-1}(\beta' - a^{j} - v) \geq -1$. Thus, $\aF^{k-1}(\beta' - a^{j} - v) = -1$. Suppose $\aF^{k-1}(\beta') = -1$; the monotonicity of $\aF^{k-1}$ implies $\aF^{k-1}(\beta' - v) = -1$, for all $v \in \bV$. On the other hand, suppose $\aF^{k-1}(\beta') = 0$, then $\beta' \in \bB^{k-1}$ (because $\beta' \in \mathbf{C}^{k-1}$), and this implies $\aF^{k-1}(\beta' - v) = -1$, for all $v \in \bV$. Thus, $\aF^{k}(\beta' - v) = \max\{\aF^{k-1}(\beta' - v), \max\limits_{j \in \{1,...,n\}} \aF^{k-1}(\beta' - v - a^{j})\} = -1$, for all $v \in \bV$. Therefore, $\beta' \in \bB^{k}$.
\qed

\thref{BkRelationship} proves a relationship between the sets $\bB^{k}$ and $\bB^{l}$ for any $k, l \in \mathbb{Z}_{+}$. This result is useful when determining the level-set-minimal vectors at each iteration.
\begin{proposition}\thlabel{BkRelationship}
For any $k \leq l$, if $\beta \not\in \bB^{k}$ and $\aF^{k}(\beta) = 0$, then $\beta \not\in \bB^{l}$.
\end{proposition}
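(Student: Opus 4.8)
The plan is to exploit the monotonicity of the sequence $\{\aF^{k}\}$ in the index $k$. Since $\aP^{k}(\beta) \subseteq \aP^{l}(\beta)$ whenever $k \leq l$ (a containment already used in \thref{Schaefer7 extension}), we have $\aF^{k}(\beta) \leq \aF^{l}(\beta)$ pointwise. Because each $\aF^{k}$ takes only the values $0$ and $-1$, this inequality sharpens to the implication that $\aF^{k}(\beta) = 0$ forces $\aF^{l}(\beta) = 0$ for every $l \geq k$, and likewise $\aF^{k}(\beta - v) = 0$ forces $\aF^{l}(\beta - v) = 0$. I would state this monotonicity-upgrade as the opening observation, since the entire argument rests on it.

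Next I would translate the hypotheses into the language of \thref{levelSetMinDef}. By definition, $\beta \in \bB^{k}$ means both $\aF^{k}(\beta) = 0$ and $\aF^{k}(\beta - v) = -1$ for \emph{all} $v \in \bV$. Under the standing assumption $\aF^{k}(\beta) = 0$, the failure of membership $\beta \notin \bB^{k}$ can therefore fail only through the minimality clause: there must exist some $v^{*} \in \bV$ with $\aF^{k}(\beta - v^{*}) \neq -1$, and since the function is $\{0,-1\}$-valued this means $\aF^{k}(\beta - v^{*}) = 0$. This step is purely a rephrasing of $\beta \notin \bB^{k}$ as the existence of a witnessing $v^{*} \in \bV$.

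To finish, I would push both conclusions up from level $k$ to level $l$ via the opening observation: from $\aF^{k}(\beta) = 0$ we get $\aF^{l}(\beta) = 0$, and from $\aF^{k}(\beta - v^{*}) = 0$ we get $\aF^{l}(\beta - v^{*}) = 0$ for the \emph{same} witness $v^{*} \in \bV$. Thus at level $l$ the minimality clause of \thref{levelSetMinDef} is again violated at $v^{*}$, so $\beta \notin \bB^{l}$, as desired. I do not anticipate a genuine obstacle here: the result is essentially a one-line consequence of the fact that the index-$k$ sequence is nondecreasing and that a single bad direction $v^{*}$ persists as $k$ grows. The only point requiring care is the elementary but crucial passage from the pointwise inequality $\aF^{k} \leq \aF^{l}$ to the value-preserving implications, which is valid precisely because the functions are binary-valued; I would make sure to invoke that explicitly rather than leaving it implicit.
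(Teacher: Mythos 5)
Your proof is correct and takes essentially the same route as the paper: both arguments extract a witness $v^{*} \in \bV$ with $\aF^{k}(\beta - v^{*}) = 0$ from the hypotheses, then use the containment $\aP^{k}(\beta - v^{*}) \subseteq \aP^{l}(\beta - v^{*})$ to conclude $\aF^{l}(\beta - v^{*}) = 0$ and hence $\beta \notin \bB^{l}$. Your additional observation that $\aF^{l}(\beta) = 0$ is harmless but unnecessary, since violating the minimality clause of \thref{levelSetMinDef} at the single direction $v^{*}$ already rules out membership in $\bB^{l}$.
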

\begin{proof}
%
Because $\aF^{k}(\beta) = 0$ and $\beta \not\in \bB^{k}$, there exists $v \in \bV$ such that $\aF^{k}(\beta - v) = 0$. Because $\aP^{k}(\beta - v) \subseteq \aP^{l}(\beta - v),$ we have $\aF^{l}(\beta - v) = 0$. This implies $\beta \not\in \bB^{l}$.
\end{proof}

Denote the dual cone of $K$ by $K^{*} = \{h \in \mathbb{R}^{m} \setbar h^{T}\beta \geq 0, \forall \ \beta \in K\}$. \thref{Schaefer13 extension} constructs a stopping criterion $\bar{k}$ for the nested procedure such that $\aF = \aF^{\bar{k}}$. The stopping criterion is computable given a feasibility assumption associated with the continuous relaxation's dual.

\begin{proposition} \thlabel{Schaefer13 extension}
There exists a finite $\bar{k} \in \mathbb{Z}_+$ such that for all $k\geq \bar{k}$ and all $\beta \in \bH$, $\aF^k(b)=\aF(b)$. 
If $\{u \in K^{*} \setbar A^{T}u \geq 1\} \neq \emptyset$, then $\bar{k}$ can be computed by solving a (continuous) conic feasibility problem using the data $(A, 1, 0, K^{*}),$ and taking the maximum of $|\bH|$ inner-products.
\end{proposition}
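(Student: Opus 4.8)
The plan is to establish the two assertions separately: the abstract existence of a finite threshold $\bar{k}$, and then its explicit computation under the stated conic feasibility hypothesis.

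For existence, I would rely on the pointwise convergence $\aF^k \to \aF$ from \thref{Schaefer8 extension} together with the finiteness of $\bH$. Since $\aP^{k-1}(\beta) \subseteq \aP^{k}(\beta)$, the sequence $\{\aF^k(\beta)\}_k$ is nondecreasing in $k$, and because it takes only the values $0$ and $-1$, once $\aF^k(\beta)$ equals $\aF(\beta)$ it stays equal for all larger $k$. Hence for each $\beta \in \bH$ there is a finite least index $k_\beta$ with $\aF^{k_\beta}(\beta)=\aF(\beta)$, and $\bar{k} := \max_{\beta \in \bH} k_\beta$ is finite with the desired stabilization property for all $k \geq \bar{k}$.

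For the computational claim, the key device is a dual-cone bound on the one-norm of any feasible solution. Assuming $\{u \in K^{*} \setbar A^{T}u \geq 1\} \neq \emptyset$, fix such a $u$, obtained by solving the continuous conic feasibility problem with data $(A, 1, 0, K^{*})$. I would then show that every $x \in \aP(\beta)$ satisfies $1^{T}x \leq u^{T}\beta$: because $\beta - Ax \in K$ and $u \in K^{*}$, we have $u^{T}(\beta - Ax) \geq 0$, so $u^{T}\beta \geq u^{T}Ax = (A^{T}u)^{T}x \geq 1^{T}x$, where the last step uses $A^{T}u \geq 1$ together with $x \geq 0$. Since $1^{T}x$ is a nonnegative integer, this yields $1^{T}x \leq \lfloor u^{T}\beta \rfloor$ for every feasible $x$.

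The conclusion then splits on the value of $\aF(\beta)$. If $\aF(\beta)=0$, some $x \in \aP(\beta)$ exists and the bound places it in $\aP^{k}(\beta)$ for every $k \geq \lfloor u^{T}\beta \rfloor$, so $\aF^{k}(\beta)=0=\aF(\beta)$; if $\aF(\beta)=-1$, then $\aF^{k}(\beta)=-1=\aF(\beta)$ for all $k$ and no bound is needed. Therefore $\bar{k} := \max\{0, \max_{\beta \in \bH} \lfloor u^{T}\beta \rfloor\}$ satisfies $\aF^{k}=\aF$ on $\bH$ for all $k \geq \bar{k}$, and it is precisely the maximum of the $|\bH|$ inner-products $u^{T}\beta$ computed after a single conic feasibility solve. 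I expect the main obstacle to be conceptual rather than technical: recognizing that the hypothesis $\{u \in K^{*} \setbar A^{T}u \geq 1\} \neq \emptyset$ is exactly the dual-feasibility condition that converts the abstract finiteness of $\bar{k}$ into a closed-form, computable bound. The one point demanding care is the rounding, namely verifying that the floor correctly captures the integer quantity $1^{T}x$ so that $\bar{k}$ is both finite and valid for every $\beta \in \bH$ simultaneously.
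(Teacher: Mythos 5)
Your proposal is correct and follows essentially the same route as the paper's proof: existence of $\bar{k}$ via the pointwise convergence of \thref{Schaefer8 extension} over the finite set $\bH$, and the computable bound via a dual-cone vector $\bar{u} \in K^{*}$ with $A^{T}\bar{u} \geq 1$, yielding $1^{T}x \leq \bar{u}^{T}A x \leq \bar{u}^{T}\beta$ for every $x \in \aP(\beta)$. The only (immaterial) difference is that you round each inner product down using the integrality of $1^{T}x$, where the paper takes the ceiling of the maximum inner product; both give a valid finite threshold.
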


\begin{proof} Since $\bH$ is a bounded set of integral points, $\bH$ is finite. For each $\beta \in \bH$, by \thref{Schaefer8 extension}{} there exists a $k_{\beta} \in \mathbb{Z}_+$ such that for all $k \geq k_{\beta}$, $F^k(\beta)=\aF(\beta)$. Set $\bar{k}= \max\{k_{\beta} \setbar \beta \in \bH\}$, then for all $k \geq \bar{k}$ and $\beta \in \bH, \aF^{k}(\beta) = \aF(\beta).$\\
Suppose $\{u \in K^{*} \setbar A^{T}u \geq 1\} \neq \emptyset$.
Consider the primal conic program $\max\{1^{T}x \setbar Ax \preceq_{K} 0, x \in \mathbb{R}^{n}_{+}\}$ and its dual $\min\{0^{T}u \setbar A^{T}u \geq 1, u \in K^{*}\}$. Note that the primal is feasible (the zero vector is a solution) and the dual is feasible (by assumption); by weak duality, both problems are bounded. Thus, both problems are feasible and bounded.\\
Let $\bar{u} \in \{u \in K^{*} \setbar A^{T}u \geq 1\}$ and $\bar{k} = \lceil \max\{\bar{u}^{T}\beta \setbar \beta \in \bH\}\rceil$, the latter of which is finite because $\bH$ is bounded. Obtaining $\bar{k}$ requires $|\bH|$ inner products of $m$-vectors. For each $\beta \in \bH$ and $k \in \mathbb{Z}_{+}$ such that $k \geq \bar{k}$, if $\bar{x} \in \aP(\beta)$, then $1^{T}\bar{x} \leq \bar{u}^{T}A\bar{x}$ because $A^{T}\bar{u} \geq 1 \geq 0$ and $\bar{x} \in \mathbb{R}^{n}_{+}$. Additionally, because $\beta - A\bar{x} \in K$ and $\bar{u} \in K^{*}$, $\bar{u}^{T}A\bar{x} \leq \bar{u}^{T}\beta$. By the definition of $\bar{k}, \bar{u}^{T}\beta \leq \bar{k} \leq k$. Hence, $\aP(\beta) = \emptyset$ if and only if $\aP^{k}(\beta) = \emptyset.$  
We conclude that $\aF^{k}(\beta) = \aF(\beta)$ for all $\beta \in \bH$ and all $k \geq \bar{k}$.
\end{proof}

\cite{Epelman2000} show that such a $\bar{u}$ in the proof of \thref{Schaefer13 extension} can be obtained in time polynomial in the problem data, the problem data's condition number, and a parameter that depends only on the dual cone $K^{*}$.

We conclude by noting that here are other sequences of functions that converge to $\aF$, and they may require fewer iterations to achieve convergence. Define the sequence of functions $G^{k}: \mathbb{R}^{m} \to \mathbb{R}$ by

\begin{equation}
G^k(\beta) \coloneqq 
\begin{cases*}
0 & if $ \{x \setbar Ax \preceq_K \beta, x\leq 2^k, x\in \mathbb{Z}^n_{+}\}\neq \emptyset, $ \\
-1 & if $\{x \setbar Ax \preceq_K \beta, x\leq 2^k, x\in \mathbb{Z}^n_{+}\} = \emptyset$.
\end{cases*}
\end{equation}

One can show $G^{k} \in \Gamma^{m}$ using a proof similar to that of \thref{schaefer5 extension}. Also, as $k$ increases, $G^{k}$ converges pointwise to $\aF$. \thref{Schaefer14 extension} indicates how a nested procedure can be constructed for $G^{k}$.

\begin{proposition} \thlabel{Schaefer14 extension}
For all $k\geq 1$,
$G^{k}(\beta) = \max\limits_{y \in \{0,1\}^{n}} G^{k-1}(\beta - 2^{k-1}Ay)$. \label{G_claim1}
\end{proposition}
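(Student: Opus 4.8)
The plan is to exploit the fact that both sides of the claimed identity take values only in $\{0,-1\}$, so it suffices to prove that $G^k(\beta)=0$ if and only if $\max_{y\in\{0,1\}^n} G^{k-1}(\beta-2^{k-1}Ay)=0$; the complementary equality at value $-1$ then follows automatically. Since a maximum of finitely many $\{0,-1\}$-valued quantities equals $0$ exactly when at least one of them is $0$, the target reduces to showing that $G^k(\beta)=0$ if and only if there exists $y\in\{0,1\}^n$ with $G^{k-1}(\beta-2^{k-1}Ay)=0$. The engine of the argument is a ``most-significant-bit'' decomposition of the decision variable: every integer coordinate $x_i$ with $0\le x_i\le 2^k$ can be written as $x_i = 2^{k-1}y_i + z_i$ with $y_i\in\{0,1\}$ and $z_i\in\{0,1,\dots,2^{k-1}\}$ (take $y_i=0,\,z_i=x_i$ when $x_i\le 2^{k-1}$, and $y_i=1,\,z_i=x_i-2^{k-1}$ otherwise). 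This is structurally analogous to the nested recursion of \thref{Schaefer7 extension}, with the single unit step replaced by a simultaneous halving of the variable bounds.

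For the forward direction, I would take a feasible $x$ witnessing $G^k(\beta)=0$, so $x\in\mathbb{Z}^n_+$, $x_i\le 2^k$ for each $i$, and $Ax\preceq_K\beta$, and split it as $x=2^{k-1}y+z$ as above. Because $\preceq_K$ is defined by membership of the difference in $K$, it is translation invariant, so $A(2^{k-1}y+z)\preceq_K\beta$ is equivalent to $Az\preceq_K \beta-2^{k-1}Ay$. Together with $z\in\mathbb{Z}^n_+$ and $z_i\le 2^{k-1}$, this exhibits $z$ as a feasible point for the program defining $G^{k-1}(\beta-2^{k-1}Ay)$, hence $G^{k-1}(\beta-2^{k-1}Ay)=0$ for this particular $y$. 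Conversely, if some $y\in\{0,1\}^n$ satisfies $G^{k-1}(\beta-2^{k-1}Ay)=0$, I would pick a witnessing $z\in\mathbb{Z}^n_+$ with $z_i\le 2^{k-1}$ and $Az\preceq_K\beta-2^{k-1}Ay$, and set $x=2^{k-1}y+z$. Then $x\in\mathbb{Z}^n_+$, each $x_i=2^{k-1}y_i+z_i\le 2^{k-1}+2^{k-1}=2^k$, and translation invariance again gives $Ax\preceq_K\beta$, so $G^k(\beta)=0$.

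The two implications give the equivalence above, and hence the stated identity. I expect the main (though modest) obstacle to be verifying that the bit-split map $(y,z)\mapsto 2^{k-1}y+z$ is onto the integer box $\{0,\dots,2^k\}^n$ with the correct bounds $z_i\le 2^{k-1}$ and $y_i\in\{0,1\}$---in particular that the upper endpoint $x_i=2^k$ is attained (via $y_i=1,\,z_i=2^{k-1}$) and that no coordinate forces $z_i>2^{k-1}$---since the entire equivalence hinges on this decomposition matching the variable bounds of $G^{k-1}$ exactly. Everything else (translation invariance of $\preceq_K$ and the collapse of the maximum owing to the $\{0,-1\}$ range) is routine.
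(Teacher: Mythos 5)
Your proposal is correct and follows essentially the same route as the paper's proof: the paper decomposes the feasible set $\{x\in\mathbb{Z}^n_+ \mid Ax\preceq_K\beta,\ x\le 2^k\}$ as a union over $y\in\{0,1\}^n$ of boxes $2^{k-1}y_j\le x_j\le 2^{k-1}(y_j+1)$ and applies the same substitution $z=x-2^{k-1}y$ (and its inverse) with the same translation argument for $\preceq_K$, which is exactly your most-significant-bit split. The bounds concern you flag is handled correctly in your construction ($x_i\le 2^k$ gives $z_i\le 2^{k-1}$ in both branches), so no gap remains.
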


\begin{proof} 
Observe that
$\{x\in \mathbb{Z}^n_+ \setbar Ax \preceq_K \beta, x_j\leq 2^k, \forAll j \in \{1,...,n\}\}=\cup_{y\in\{0,1\}^n}\{x\in \mathbb{Z}^n_+| Ax\preceq_K \beta, 2^{k-1}y_{j} \leq x_j \leq 2^{k-1}(y_{j}+1)\}$.

Consider $y \in \{0,1\}^{n}$. Suppose $\bar{x} \in \{x\in \mathbb{Z}^n_+ \setbar Ax \preceq_{K} \beta, 2^{k-1}y_{j} \leq x_j \leq 2^{k-1}(y_{j}+1)\}$. Let $\bar{z} = \bar{x} - 2^{k-1}y,$ then $A\bar{z} = A\bar{x} - 2^{k-1}Ay$. This implies that $A\bar{z} + 2^{k-1}Ay = A\bar{x} \preceq_{K} \beta$, which is true if and only if $A\bar{z} \preceq_{K} \beta - 2^{k-1}Ay$. Further, because $2^{k-1}y_{j} \leq \bar{x}_{j} \leq 2^{k-1}(y_{j}+1)$ for all $j \in \{1,...,n\}$, $0 \leq \bar{z} \leq 2^{k-1}$. Thus, $\bar{z} \in \{z \in \mathbb{Z}^{n}_{+} \setbar Az \preceq_K \beta - 2^{k-1}Ay, z \leq 2^{k-1}\}$.

Also, given $\hat{z} \in \{z \in \mathbb{Z}^{n}_{+} \setbar Az \preceq \beta - 2^{k-1}Ay, z \leq 2^{k-1}\}$, let $\hat{x} = \hat{z} + 2^{k-1}y$. It can be similarly shown that $\hat{x} \in \{x\in \mathbb{Z}^n_+| Ax\preceq_K \beta, 2^{k-1}y_j\leq x_j \leq 2^{k-1}(y_j+1)\}$.

Thus, for a given $y \in \{0,1\}^{n}$, $\{x\in \mathbb{Z}^n_+| Ax\preceq_K \beta, 2^{k-1}y_j\leq x_j \leq 2^{k-1}(y_j+1)\} \neq \emptyset$ if and only if $\{z \in \mathbb{Z}^{n}_{+} \setbar Az \preceq \beta - 2^{k-1}Ay, z \leq 2^{k-1}\} \neq \emptyset$. Moreover, $\{x \in \mathbb{Z}^{n}_{+} \setbar Ax \preceq_{K} \beta, x \leq 2^{k}\} = \emptyset$ if and only if for each $y \in \{0,1\}^{n}, \{z \in \mathbb{Z}^{n}_{+} \setbar Az \preceq_{K} \beta - 2^{k-1}Ay, z \leq 2^{k-1}\} = \emptyset$.

Therefore, $G^{k}(\beta) = 0$ if and only if $\max\limits_{y \in \{0,1\}^{n}} G^{k-1}(\beta - 2^{k-1}Ay) = 0$, and because $G^{l}(\beta) \in \{0, -1\}$ for all $\beta \in \mathbb{R}^{m}, l \in \mathbb{Z}_{+}$, and the result follows.
\end{proof}

\thref{Schaefer14 extension} shows that a similar approach to that used for $\aF^{k}$ is possible. Compared to $\aF^{k}$, iterating with $G^{k}$ requires fewer iterations as the restrictions on the size of feasible solutions are relaxed at an exponential rate. However, one can observe that these steps require more computation as one must search through the vectors $y \in \{0,1\}^{n}$.

\section{Conclusion}
In this paper, we establish theorems of the alternative for conic integer programs using superadditive duality. In addition, we provide a nested procedure to determine which integral right-hand sides in a bounded set are feasible.  Future directions of this work include theorems of the alternative for conic mixed-integer programs.  

\section*{Acknowledgments} The authors would like to thank David Mildebrath, Saumya Sinha, and Silviya Valeva of Rice University for their helpful comments. This research was supported by National Science Foundation grant CMMI-1826323.

\bibliographystyle{plainnat}
\bibliography{farkasbib.bib}

\begin{thebibliography}{23}
\providecommand{\natexlab}[1]{#1}
\providecommand{\url}[1]{\texttt{#1}}
\expandafter\ifx\csname urlstyle\endcsname\relax
  \providecommand{\doi}[1]{doi: #1}\else
  \providecommand{\doi}{doi: \begingroup \urlstyle{rm}\Url}\fi

\bibitem[Alizadeh and Goldfarb(2003)]{Alizadeh2003}
F.~Alizadeh and D.~Goldfarb.
\newblock Second-order cone programming.
\newblock \emph{Mathematical Programming}, 95\penalty0 (1):\penalty0 3--51, Jan
  2003.

\bibitem[Blair and Jeroslow(1982)]{Blair1982}
C.~E. Blair and R.~G. Jeroslow.
\newblock The value function of an integer program.
\newblock \emph{Mathematical Programming}, 23\penalty0 (1):\penalty0 237--273,
  Dec 1982.

\bibitem[{\c{C}}ezik and Iyengar(2005)]{Cezik2005}
M.T. {\c{C}}ezik and G.~Iyengar.
\newblock Cuts for mixed 0-1 conic programming.
\newblock \emph{Mathematical Programming}, 104\penalty0 (1):\penalty0 179--202,
  Sep 2005.

\bibitem[Cvetkovi{\'{c}} et~al.(1999)Cvetkovi{\'{c}}, {\v{C}}angalovi{\'{c}},
  and Kova{\v{c}}evi{\'{c}}-Vuj{\v{c}}i{\'{c}}]{Cvetkovic1999}
D.~Cvetkovi{\'{c}}, M.~{\v{C}}angalovi{\'{c}}, and
  V.~Kova{\v{c}}evi{\'{c}}-Vuj{\v{c}}i{\'{c}}.
\newblock Semidefinite programming methods for the symmetric traveling salesman
  problem.
\newblock In G.~Cornu{\'e}jols, R.~E. Burkard, and G.~J. Woeginger, editors,
  \emph{Integer Programming and Combinatorial Optimization}, pages 126--136,
  Berlin, Heidelberg, 1999. Springer Berlin Heidelberg.

\bibitem[Dehghanian and Schaefer(2016)]{Dehghanian2016}
A.~Dehghanian and A.~J. Schaefer.
\newblock Superadditive characterizations of pure integer programming
  feasibility.
\newblock \emph{Optimization Letters}, 10:\penalty0 181--188, 2016.

\bibitem[Drewes(2009)]{Drewes2009}
S.~Drewes.
\newblock \emph{Mixed Integer Second Order Cone Programming}.
\newblock PhD thesis, Technische Universit{\"a}t Darmstadt, 2009.

\bibitem[D{\"u}r(2010)]{Dur2010}
M.~D{\"u}r.
\newblock Copositive programming -- a survey.
\newblock In M.~Diehl, F.~Glineur, E.~Jarlebring, and W.~Michiels, editors,
  \emph{Recent Advances in Optimization and its Applications in Engineering},
  pages 3--20, Berlin, Heidelberg, 2010. Springer Berlin Heidelberg.

\bibitem[Edmonds and Giles(1977)]{Edmonds1977}
J.~Edmonds and R.~Giles.
\newblock A min-max relation for submodular functions on graphs.
\newblock In P.L. Hammer, E.L. Johnson, B.H. Korte, and G.L. Nemhauser,
  editors, \emph{Studies in Integer Programming}, volume~1 of \emph{Annals of
  Discrete Mathematics}, pages 185 -- 204. Elsevier, 1977.

\bibitem[Epelman and Freund(2000)]{Epelman2000}
M.~Epelman and R.~M. Freund.
\newblock Condition number complexity of an elementary algorithm for computing
  a reliable solution of a conic linear system.
\newblock \emph{Mathematical Programming}, 88\penalty0 (3):\penalty0 451--485,
  2000.

\bibitem[Farkas(1894)]{Farkas1894}
G.~Farkas.
\newblock On the application of the mechanical principles of {F}ourier.
\newblock \emph{Mathematikai \'{e}s Term\'{e}szettudom\'{a}nyi Ertesito},
  12:\penalty0 457--472, 1894.

\bibitem[Goemans and Williamson(1995)]{Goemans1995}
M.~X. Goemans and D.~P. Williamson.
\newblock Improved approximation algorithms for maximum cut and satisfiability
  problems using semidefinite programming.
\newblock \emph{Journal of the ACM}, 42\penalty0 (6):\penalty0 1115--1145,
  November 1995.

\bibitem[K\"{o}ppe and Weismantel(2004)]{Koppe2004}
M.~K\"{o}ppe and R.~Weismantel.
\newblock Cutting planes from a mixed integer {F}arkas lemma.
\newblock \emph{Operations Research Letters}, 32\penalty0 (3):\penalty0 207 --
  211, 2004.

\bibitem[Lasserre(2004)]{Lasserre2004}
J.~B. Lasserre.
\newblock A discrete {F}arkas lemma.
\newblock \emph{Discrete Optimization}, 1\penalty0 (1):\penalty0 67 -- 75,
  2004.

\bibitem[Laurent and Poljak(1995)]{Laurent1995}
M.~Laurent and S.~Poljak.
\newblock On a positive semidefinite relaxation of the cut polytope.
\newblock \emph{Linear Algebra and its Applications}, 223-224:\penalty0 439 --
  461, 1995.

\bibitem[Letchford and S{\o}rensen(2012)]{Letchford2012}
A.~N. Letchford and M.~M. S{\o}rensen.
\newblock Binary positive semidefinite matrices and associated integer
  polytopes.
\newblock \emph{Mathematical Programming}, 131\penalty0 (1):\penalty0 253--271,
  Feb 2012.

\bibitem[Lobo et~al.(1998)Lobo, Vandenberghe, Boyd, and Lebret]{Lobo1998}
M.~S. Lobo, L.~Vandenberghe, S.~Boyd, and H.~Lebret.
\newblock Applications of second-order cone programming.
\newblock \emph{Linear Algebra and its Applications}, 284\penalty0
  (1):\penalty0 193 -- 228, 1998.
\newblock International Linear Algebra Society (ILAS) Symposium on Fast
  Algorithms for Control, Signals and Image Processing.

\bibitem[Luenberger and Ye(2015)]{Luenberger2015}
D.~G. Luenberger and Y.~Ye.
\newblock \emph{Linear and Nonlinear Programming}.
\newblock Springer Publishing Company, Incorporated, 2015.

\bibitem[{Manousakis} and {Korres}(2018)]{Manousakis2018}
N.~M. {Manousakis} and G.~N. {Korres}.
\newblock An advanced measurement placement method for power system
  observability using semidefinite programming.
\newblock \emph{IEEE Systems Journal}, 12\penalty0 (3):\penalty0 2601--2609,
  2018.

\bibitem[Moran and Kocuk(2018)]{Moran2018}
D.~Moran and B.~Kocuk.
\newblock On subadditive duality for conic mixed-integer programs.
\newblock \emph{arXiv e-prints}, art. arXiv:1808.10419, 2018.

\bibitem[Moran et~al.(2012)Moran, Dey, and Vielma]{Moran12}
D.~Moran, S.~Dey, and J.P. Vielma.
\newblock Strong dual for conic mixed-integer programs.
\newblock \emph{SIAM Journal on Optimization}, 22:\penalty0 1136--1150, 2012.

\bibitem[Schrijver(1986)]{Schrijver1986}
A.~Schrijver.
\newblock \emph{Theory of Linear and Integer Programming}.
\newblock John Wiley \& Sons, Inc., New York, NY, USA, 1986.

\bibitem[Trapp et~al.(2013)Trapp, Prokopyev, and Schaefer]{Trapp2013}
A.~C. Trapp, O.~A. Prokopyev, and A.~J. Schaefer.
\newblock On a level-set characterization of the value function of an integer
  program and its application to stochastic programming.
\newblock \emph{Operations Research}, 61\penalty0 (2):\penalty0 498--511,
  March-April 2013.

\bibitem[Vandenberghe and Boyd(1996)]{Vandenberghe1996}
L.~Vandenberghe and S.~Boyd.
\newblock Semidefinite programming.
\newblock \emph{SIAM Review}, 38\penalty0 (1):\penalty0 49--95, 1996.

\end{thebibliography}

\newpage
\appendix
\appendixpage


\section{Proofs}
\noindent \textbf{Proposition~\ref{ourWeakDuality}.}
\textit{Let $x$ be a feasible solution to CIP($\beta$), and let $F$ be a feasible solution to \eqref{MoranDual}. Then $F(\beta) \geq c^{T}x$.}
\begin{proof}

Observe the following:
\begin{subequations}
\begin{align}
F(\beta) &\geq F(Ax) \label{wdProof1}\\
&\geq F\left(\sum\limits_{j = 1}^{n} a^{j}x_{j}\right) \label{wdProof2}\\
&\geq \sum\limits_{j = 1}^{n} F(a^{j}x_{j}) \label{wdProof3}\\
&\geq \sum\limits_{j = 1}^{n} F(a^{j})x_{j} \label{wdProof4}\\
&\geq \sum\limits_{j = 1}^{n} c_{j}x_{j}. \label{wdProof5}
\end{align}
\end{subequations}
\eqref{wdProof1} holds because $x$ is primal feasible and $F$ is dual feasible, which imply $\beta - Ax \in K$ and $F$ is nondecreasing with respect to $K$. \eqref{wdProof2}-\eqref{wdProof4} hold because $F$ is superadditive. \eqref{wdProof5} holds because $F(a^{j}) \geq c^{j}$, for all $j = 1,...,n$. 
\end{proof}

\noindent \textbf{Proposition~\ref{MoranIPDual}.} \citep{Moran2018} Consider CIP($\beta$), where $K$ is a regular cone. Then \eqref{MoranDual} is a strong dual to CIP($\beta$). 
\begin{proof} 
Let $\tilde{K} = \{(y,z) \in \mathbb{R}^{m + n} \setbar y \in K, z \in \mathbb{R}^{n}_{+}\}$, $\tilde{A} = [A^{T} \ -I]^{T} \in \mathbb{R}^{(m+n)\times n},$ where $I$ is the identity matrix in $\mathbb{R}^{n \times n}$, and $\tilde{\beta} = [\beta^{T} \ 0^{T}]^{T} \in \mathbb{R}^{m + n}$. Then CIP($\beta$) is equivalent to
\begin{equation}\label{MoranProof1}
\begin{aligned}
\inf \ &-c^{T}x\\
\text{s.t. } \ &\tilde{A}x \succeq_{-\tilde{K}} \tilde{\beta}\\
&x \in \mathbb{Z}^{n}.
\end{aligned}
\end{equation}

From \cite{Moran2018}, a strong dual to \eqref{MoranProof1} is 
\begin{equation}\label{MoranProof2}
\begin{aligned}
\sup \ &\tilde{G}(\beta)\\
\text{s.t. } \ &\tilde{G}(\tilde{a}^{j}) = -c_{j}, \text{ for all } j \in \{1,...,n\},\\
&\tilde{G}(0) = 0,\\
&\tilde{G} \text{ is subadditive and nondecreasing with respect to } -\tilde{K}.
\end{aligned}
\end{equation}

Further, \eqref{MoranProof2} is equivalent to 
\begin{equation}\label{MoranProof3}
\begin{aligned}
\alpha_{1} = \inf \ &\tilde{F}(\beta)\\
\text{s.t. } \ &\tilde{F}(\tilde{a}^{j}) = c_{j}, \text{ for all } j \in \{1,...,n\},\\
&\tilde{F}(0) = 0,\\
&\tilde{F} \text{ is superadditive and nondecreasing with respect to } \tilde{K}.
\end{aligned}
\end{equation}
This equivalence is due to a substitution ($-\tilde{G}$ for $\tilde{F}$), switching subadditive with superadditive, and by the relationship between $\tilde{G}$ is nondecreasing with respect to $-\tilde{K}$ and to $\tilde{F}$ is nondecreasing with respect to $\tilde{K}$. From \cite{Moran2018}, the optimal objective value of CIP($K$) is equal to $\alpha^{1}$, assuming feasibility. 

Consider
\begin{equation}\label{MoranProof4}
\begin{aligned}
\alpha_{2} = \inf \ &F(\beta)\\
\text{s.t.} \ &F(a^{j}) \geq c_{j}, \text{ for all } j \in \{1,...,n\},\\
&F(0) = 0,\\
&F \in \Gamma^{m}(K).
\end{aligned}
\end{equation}

We first show that $\alpha^{1} \geq \alpha^{2}$. Let $\widehat{F}$ be a feasible solution for \eqref{MoranProof3}, and define $\bar{F}: \mathbb{R}^{m} \to \mathbb{R}$ by $\bar{F}(y) = \widehat{F}(y, 0)$. By the feasibility of $\widehat{F}, \bar{F}(a^{j}) = \widehat{F}(a^{j}, 0) \geq \widehat{F}(\tilde{a}^{j}) = c_{j},$ for all $j = 1,...,n$. 
Additionally, $\bar{F}(0) = \widehat{F}(0,0) = 0$.  
Consider $y^{1}, y^{2} \in \mathbb{R}^{m}.$ Then $\bar{F}(y^{1}) + \bar{F}(y^{2}) = \widehat{F}(y^{1},0) + \widehat{F}(y^{2},0) \leq \widehat{F}(y^{1} + y^{2}, 0) = \bar{F}(y^{1} + y^{2}).$ Next, consider $y^{1}, y^{2}$ such that $y^{1} \succeq_{K} y^{2}$. Then $(y^{1},0) \succeq_{K} (y^{2}, 0)$, which implies $\bar{F}(y^{1}) = \widehat{F}(y^{1}, 0) \geq \widehat{F}(y^{2}, 0) = \bar{F}(y^{2})$. Hence, $\bar{F} \in \Gamma^{m}(K)$ and is feasible for \eqref{MoranProof4}.
Furthermore, $\bar{F}(b) = \widehat{F}(\tilde{b})$, which implies that $\alpha_{2} \leq \alpha_{1}$.

We now show that $\alpha^{1} \leq \alpha^{2}$. Consider an optimal solution $x^{*}$ of CIP($\beta$). Then $c^{T}x^{*} = \alpha^{1}$. For any feasible solution $\bar{F}$ of \eqref{MoranProof4},
\begin{align*}
\bar{F}(\beta) &\geq \bar{F}(Ax^{*})\\
&\geq \sum\limits_{j = 1}^{n} \bar{F}(a^{j}x^{*}_{j})\\
&\geq \sum\limits_{j = 1}^{n}\sum\limits_{k = 1}^{x^{*}_{j}} \bar{F}(a^{j})\\
&\geq \sum\limits_{j = 1}^{n}\sum\limits_{k = 1}^{x^{*}_{j}} c_{j}\\
&= c^{T}x^{*}\\
&= \alpha^{1}.
\end{align*}
\end{proof}

\section{Pseudocode}
We now explain the pseudocode briefly. Algorithm \ref{F_alg} is a more detailed version of Algorithm \ref{F_alg_short}, both of which describe the nested procedure in Section \ref{constructF}. Within Algorithm \ref{F_alg}, EVAL, EVAL-SPEC, LSM-POOL, and UPDATE-SETS are functions.

As defined in Section \ref{constructF}, the functions $\aF^{k}: \bH \to \mathbb{R}$ represent cardinality-constrained feasibility functions that approach $\aF$ as $k$ increases. The sets $\bB^{k}$ contain the level-set-minimal vectors of $\aF^{k}$ (vectors $\beta$ with $\aF^{k}(\beta) = 0$ that are minimal with respect to $K$), and they can be computed in a nested manner (one computes the sets $\mathbf{C}^{k}$ during this process). The set $\mathbf{H}$ starts with the user-specified finite set of right-hand sides, and at each iteration, it contains all considered right-hand sides that do not yield feasible problems at the current iteration. The set $\mathbf{S}$ contains right-hand sides for which one can guarantee feasibility at the current and all future iterations. During iteration $k$, Algorithm~\ref{F_alg} computes $\mathbf{C}^{k-1}$ using LSM-POOL. Because $\aF^{k}(\beta) \leq \aF^{l}(\beta)$ for any $k \leq l$, $\aF^{k}(\beta) = 0$ for all $\beta \in \mathbf{S}$; thus, $\aF^{k}(\beta)$ is inferred for all such $\beta$. EVAL and EVAL-SPEC are used to evaluate $\aF^{k}(\beta) = \max\{\aF^{k-1}(\beta), \max\limits_{j \in \{1,...,n\}} \aF^{k-1}(\beta - a^{j})\}$. UPDATE-SETS returns the updated solved and unsolved right-hand sides as well as the level-set-minimal vectors (when specified).

\begin{algorithm}
\caption{Construction of feasibility function $\aF^{\bar{k}}$}\label{F_alg}
\begin{algorithmic}[1]
\Procedure{MAIN}{}
\State Given: $A, \bar{k}, \bH, K$
\State $\bB^{0} \gets \{0\}$, $\mathbf{S} \gets \bH \cap K$, $\aF^{0} \gets 0$
\For{$\beta \in \mathbf{H}$ $\cap \ K$}
    \State $\aF^{0}(\beta) \gets 0$
\EndFor
\For{$\beta \in \mathbf{H} \backslash K$}
    \State $\aF^{0}(\beta) \gets -1$
\EndFor
\For{$k = 1,...,\bar{k}$}
    \State $\bB^{k} \gets \emptyset$
    \State $\mathbf{C}^{k-1} \gets $ LSM-POOL($\aF^{k-1}, \mathbf{B}^{k-1}, A, \bH$)
    \For{$\beta \in \mathbf{S}$}
        \State $\aF^{k}(\beta) \gets 0$
    \EndFor
    \For{$\beta \in \mathbf{C}^{k-1} \backslash (\mathbf{S} \cap (\bB^{k-1})^{c})$} \label{CkMinus1Line}
        \If{$\beta \not\in \mathbf{S}$}
            \State $\aF^{k}(\beta) \gets \text{EVAL-SPEC}(\aF^{k-1}, \beta, \bB^{k-1}, A)$
        \EndIf
        \If{$\aF^{k}(\beta) == 0$}
            \State $(\mathbf{S}, \mathbf{H}, \bB^{k}) \gets \text{UPDATE-SETS}(\mathbf{S}, \mathbf{H}, \beta, \bB^{k})$
        \EndIf
    \EndFor
    \For{$\beta \in \mathbf{H} \backslash (\mathbf{S} \cup \mathbf{C}^{k-1})$}
        \State $\aF^{k}(\beta) \gets \text{EVAL}(\aF^{k-1}, \beta, \bB^{k})$
        
        \If {$\aF^{k}(\beta) == 0$}
            \State $(\mathbf{S}, \mathbf{H}, \text{NULL}) \gets \text{UPDATE-SETS}(\mathbf{S}, \mathbf{H}, \beta, \text{NULL})$
            \EndIf
    \EndFor
\EndFor
\Return $\aF^{\bar{k}}$
\EndProcedure
\end{algorithmic}
\end{algorithm}

\begin{algorithm}
\caption{Evaluate $\aF^{k}(\beta)$ for $\beta \in \mathbf{C}^{k-1}$}
\begin{algorithmic}[1]
\Procedure{EVAL-SPEC}{}($\widetilde{F}, \tilde{\beta}, \widetilde{\bB}, \widetilde{A})$
    \State $z \gets \widetilde{F}(\tilde{\beta})$
    \If{$z == 0$}
        \State \Return $z$
    \EndIf
    \For{$j \in \{1,...,n\}, \bar{\beta} \in \widetilde{\bB}$}
        \If{$\bar{\beta} \preceq_{K} \tilde{\beta} - a^{j}$}
            \State $z \gets 0$
            \State \Return $z$
        \EndIf
    \EndFor
    \State \Return $-1$
\EndProcedure
\end{algorithmic}
\end{algorithm}

\begin{algorithm}
\caption{Evaluate $\aF^{k}(\beta)$ for $\beta \not\in \mathbf{C}^{k-1}$}
\begin{algorithmic}[1]
\Procedure{EVAL}{}($\widetilde{F}, \tilde{\beta}, \widetilde{\bB})$
    \State $z \gets \widetilde{F}(\tilde{\beta})$
    \If{$z == 0$}
        \State \Return $z$
    \EndIf
    \For{$\bar{\beta} \in \widetilde{\bB}$}
        \If{$\bar{\beta} \preceq_{K} \tilde{\beta}$}
            \State $z \gets 0$
            \State \Return $z$
        \EndIf
    \EndFor
    \State \Return $-1$
\EndProcedure
\end{algorithmic}
\end{algorithm}

\begin{algorithm}
\caption{Construct the set $\mathbf{C}^{k-1}$}
\begin{algorithmic}[1]
\Procedure{LSM-POOL}{}($\widetilde{F}, \widetilde{\mathbf{B}}, \widetilde{A}, \widetilde{\bH}$)
\State $\mathbf{C} \gets \emptyset$
\State $\widehat{\bH} \gets \{\beta \in \widetilde{\bH} \setbar \widetilde{F}(\beta) = -1 \text{ or } \beta \in \widetilde{\bB}\}$
\For{$\beta \in \widehat{\bH}$}
    \For{$j \in \{1,...,n\}$}
        \State $z_{j} \gets [\widetilde{F}(\beta - \widetilde{a}^{j}) == 0] \bigwedge [\neg(\beta - \widetilde{a}^{j} \in \widetilde{\bB})]$
        \If{$\bigwedge\limits_{j = 1}^{n} (\neg z_{j}) == \text{TRUE}$}
            \State $\mathbf{C} \gets \mathbf{C} \cup \{\beta\}$
        \EndIf
    \EndFor
\EndFor
\State \Return $\mathbf{C}$
\EndProcedure
\end{algorithmic}
\end{algorithm}

\begin{algorithm}
\caption{Update the level-set-minimal vector, solved vector, and unsolved vector sets}
\begin{algorithmic}[1]
\Procedure{UPDATE-SETS}{}($\widetilde{\mathbf{S}}, \widetilde{\mathbf{H}}, \tilde{\beta}, \widetilde{\bB}$)
\State $\widetilde{\mathbf{S}} \gets \widetilde{\mathbf{S}} \cup \{\tilde{\beta}\}$
\State $\widetilde{\mathbf{H}} \gets \widetilde{\mathbf{H}} \backslash \{\tilde{\beta}\}$
\If{$\widetilde{\bB} \neq \text{NULL}$}
    \State $\widetilde{\bB} \gets \widetilde{\bB} \cup \{\tilde{\beta}\}$
\EndIf
\Return{($\widetilde{\mathbf{S}}, \widetilde{\mathbf{H}}, \widetilde{\bB})$}
\EndProcedure
\end{algorithmic}
\end{algorithm}

\end{document}